\documentclass[11pt]{article}
\usepackage[a4paper,margin=28mm]{geometry}
\usepackage[T1]{fontenc}
\usepackage{amsmath,amssymb,amsthm,mathtools}
\usepackage{booktabs}
\usepackage{array}
\usepackage{graphicx}
\usepackage{microtype}
\usepackage{enumitem}
\usepackage[hidelinks]{hyperref}
\usepackage{cleveref}
\crefname{enumi}{item}{items}
\Crefname{enumi}{Item}{Items}

\newtheorem{theorem}{Theorem}[section]
\newtheorem{proposition}[theorem]{Proposition}
\newtheorem{lemma}[theorem]{Lemma}

\theoremstyle{definition}
\newtheorem{definition}[theorem]{Definition}
\theoremstyle{remark}

\newtheorem*{remark*}{Remark}

\newcommand{\R}{\mathbb{R}}
\newcommand{\Z}{\mathbb{Z}}
\newcommand{\Sone}{\mathbb{S}^{1}}
\newcommand{\CR}{\operatorname{CR}}
\newcommand{\conv}{\operatorname{conv}}
\newcommand{\Var}{\operatorname{Var}}
\newcommand{\osc}{\operatorname{osc}}
\newcommand{\fan}{\operatorname{fan}}
\newcommand{\clip}{\operatorname{clip}}
\newcommand{\RF}{\mathit{RF}}
\newcommand{\pfin}{\varphi_{\mathrm{fin}}}
\newcommand{\Csp}{C_{\mathrm{sp}}}
\newcommand{\dd}{\,\mathrm{d}}
\DeclareUrlCommand\lean{\urlstyle{tt}}
\DeclareUrlCommand\file{\urlstyle{tt}}
\makeatletter\g@addto@macro\UrlBreaks{\do\_\do\-}\makeatother

\title{The logarithmic spiral is optimal for shoreline search\\[2pt]
\large A computer-assisted proof\thanks{Large language models were used throughout this
work---GPT~6 Astra, Claude Fable~5.1, and Claude Opus~5---to find the reduction, to search for the
storage function, and to write the accompanying Lean development and the two verifiers. The storage function's inequalities are verified in Arb ball arithmetic by
two independent implementations, and the finite-dimensional and one-dimensional measure-theoretic
steps of the reduction are formalized in Lean~4; the complete verification development accompanies
this paper as ancillary files. \Cref{sec:checks} states what is machine-checked and what is not.}}
\author{Alexander Temerev\\University of Geneva\\\texttt{alexander.temerev@unige.ch}}
\date{21 September 2026}

\begin{document}
\maketitle

\begin{abstract}
A ship starts at a point of the plane and moves at unit speed; it has to reach an unknown straight
line, of which neither the distance nor the direction is known. The competitive ratio of a path is
the supremum, over all lines, of the time at which the line is reached divided by its distance.
Baeza-Yates, Culberson and Rawlins conjectured that a logarithmic spiral, with ratio
$\Csp=13.8111351794611\ldots$, is optimal. We give a computer-assisted proof. Paths are arbitrary:
the distance from the start and the polar angle may both decrease. The proof lifts the set of
found directions to the universal cover of the circle, where unfolding the polar angle can only
increase it (Kneser--Poulsen on the line); a bookkeeping inequality with a monotone final source
then bounds the covered measure by the reward of a three-state relaxed control problem, in which
inward motion is an ordinary control and excursions below the guaranteed disk are impulses. An
explicit $C^1$ storage function, a tensor cubic B-spline plus a closed-form term, satisfies the
dissipation inequalities of that problem at the spiral's level and is tight only at the spiral; this is
verified with about $10^6$ boxes of Arb ball arithmetic, an exact jet and an interval Hessian at the
spiral.
\end{abstract}

\noindent\textbf{MSC 2020:} 90B40 (primary), 49L20, 52A10, 65G20, 68V05, 68W27.\\
\noindent\textbf{Keywords:} shoreline search, online search, competitive ratio, logarithmic spiral,
Kneser--Poulsen, storage function, interval arithmetic, computer-assisted proof.

\section{Introduction}\label{sec:intro}

\paragraph{The problem.} A search path is a $1$-Lipschitz map $x\colon[0,\infty)\to\R^2$ with $x(0)=0$.
A shoreline is a line $L(n,d)=\{y:\langle y,n\rangle=d\}$ with unit normal $n\in\Sone$ and distance
$d>0$. The path finds it at time $T_x(n,d)=\inf\{t:\langle x(t),n\rangle\ge d\}$, and
\[
  \CR(x)=\sup_{n\in\Sone,\;d>0}\frac{T_x(n,d)}{d},\qquad C_*=\inf_x\CR(x).
\]
When the distance is known, the optimal path was found by Isbell and proved optimal by Joris
\cite{isbell1957,joris1980}; its length, $1+\sqrt3+7\pi/6=6.39724\ldots$, is a lower bound for the
problem with unknown distance as well. When the distance is not known, Baeza-Yates, Culberson and
Rawlins \cite{baezayates1993} proposed a logarithmic spiral and conjectured that it is optimal;
Finch and Zhu \cite{finchzhu2016} computed its ratio, $\Csp=13.81113\ldots$. Exponential and spiral
strategies are optimal for several related minimax search problems \cite{galchazan1976}, and spiral
search is optimal for the different problem of finding a \emph{point} \cite{langetepe2010}; neither
settles the question here, because for a line the convex hull of the path matters and a ship may
profit from moving inwards. Until recently the strongest unconditional lower bound was Isbell's
$6.3972\ldots$ \cite{baezayatesschott1995}, with $12.5385\ldots$ known only under a cyclic
(spiral-like) restriction, where it is inherited from the axis-parallel shoreline problem
\cite{langetepe2012}; see \cite{georgiou2026} for a recent account. Earlier in this project the
unconditional bound was raised to $C_{\log}=12.5937\ldots$ \cite{temerev2026lower}, by a scheduling
argument unrelated to the method below, and Koch \cite{koch2026cells} showed that $C_*$ is
effectively computable.

\begin{theorem}[Main theorem]\label{thm:main}
Every search path has $\CR(x)\ge\Csp$. Hence $C_*=\Csp$, where $\Csp=1/m_*$ and $(k_*,m_*)$ is
the solution of the two equations of \cref{def:constants},
\[
  \Csp=13.811135179461131253731\ldots,\qquad k_*=4.70655656626900428765\ldots
\]
\end{theorem}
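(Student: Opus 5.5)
The plan follows the route sketched in the abstract, in three stages: a geometric reduction, an optimal-control relaxation, and a verified certificate.

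\emph{Reduction to covered measure on the universal cover.} Fix a path $x$ with $\CR(x)=C<\Csp$ and argue by contradiction. By scaling, the condition $\CR(x)\le C$ says that by time $t$ every line at distance $d\le t/C$ has been found. Equivalently, the support function of $\conv\{x(s):s\le t\}$ is at least $t/C$ in every direction $n\in\Sone$. I would encode this through the set of ``found directions'' at each radius, lifted to the universal cover $\R$ of $\Sone$ via the polar angle of the path.

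The key geometric step is that unfolding the polar angle can only enlarge the lifted covered set. When the angle backtracks, the set swept on the line is covered by an unfolded, monotone reparametrisation. This is a one-dimensional Kneser--Poulsen statement: the union of intervals can only grow when their centres are pushed apart.

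From this I would derive a bookkeeping inequality. The measure of found directions up to time $t$, required to reach $2\pi$ at each scale, is bounded by an integral of a running reward depending on three states: the normalised radius $r/t$, the guaranteed-disk radius ratio, and an angular/phase variable. Inward motion enters as an ordinary control, and excursions below the guaranteed disk enter as impulses. A monotone final-source term would absorb the boundary contribution at the current time.

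\emph{Value bound by a storage function.} Next I would show that the three-state relaxed control problem has reward rate at most the spiral's at level $m=1/C$, which rules out any $C<\Csp$. The tool is a dissipation (verification) inequality with a $C^1$ storage function $V$. For every admissible control and every impulse, I would require $\dot V+\text{reward}-\text{(spiral rate at }m_*) \le 0$, together with a jump inequality $V^+ - V^- + \text{impulse reward}\le 0$. Integrating along the trajectory, and using that $V$ is bounded, gives average reward at most that of the spiral. Running the argument with $m=1/C>m_*$ then contradicts the covering requirement.

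For $V$ I would take a tensor cubic B-spline fitted numerically to an approximate value function, plus a closed-form term that captures the singular behaviour near the spiral's orbit and near the boundary of the state space. The constants $k_*,m_*$ of \cref{def:constants} are exactly the conditions that make the spiral's trajectory an equality case.

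\emph{Verification and the main obstacle.} The inequalities would be checked on a compact state domain by adaptive box subdivision in Arb ball arithmetic, with analytic tail estimates outside that domain. The real obstacle is tightness at the spiral. There the dissipation slack vanishes, so no interval enclosure can certify nonpositivity in a neighbourhood directly. I would handle this by computing an exact jet of the slack at the spiral's state, which vanishes to first order. I would then bound the Hessian of the slack by interval arithmetic on a small neighbourhood and show that it is negative definite, transversally to the degenerate directions. This gives a local quadratic bound that glues to the box verification away from the spiral.

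The final steps combine the pieces. The contradiction gives $\CR(x)\ge\Csp$ for every path, and the spiral of \cite{baezayates1993,finchzhu2016} attains $\Csp$, hence $C_*=\Csp$.
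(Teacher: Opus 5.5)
Your outline has the paper's overall shape: lift, one-dimensional Kneser--Poulsen, relaxed control problem, storage function, ball arithmetic with a jet and a Hessian at the spiral. But the steps that carry the proof are missing, and two of them would not work as you describe.

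\emph{The three-state problem.} Of your coordinates, $r/t$ and the ratio to the guaranteed disk carry the same information, and the angle is unspecified. No such state makes $\int\mu\dd\tau$ a functional of the state, because the covered measure depends on the whole history of sources. What closes the system in the paper is that the visit time $\tau_\sigma(\varphi)$ of the best source of a lifted direction (the maximiser of $z+\log\cos(\varphi-\Theta)$) is nondecreasing in $\varphi$, by concavity of $\log\cos$ and an exchange argument. Hence Lebesgue measure on directions pushes forward to the Stieltjes measure $\dd\pfin$ of a finalisation front. The covered time of a direction is then at most $[v_\sigma+\log\cos a_\sigma]_+$, paid as a fan reward at finalisation, plus $(\tau_\sigma-\tau_f)_+$. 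The second term integrates to at most $\int(\RF-\pfin)_+\dd\tau$, where $\RF$ is the running maximum of $\Theta+h(v_+)$. Only then is the reward $\fan\,\dd\pfin+(q_+-2\pi)\dd\tau$ a function of $(v,a,c)=(v,\Theta-\pfin,\RF-\Theta)$, with $c$ reflected at $c=h(v)$. The state space is then compact, so no tail estimates arise, but $|a|\le\pi/2$ and $h(v_+)\le c\le\arccos m$ have to be proved. The monotone final source is not there to absorb a boundary term.

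\emph{The contradiction.} A storage function certified at $m_*$ only bounds the mean of $\mu_{m_*}-2\pi$ by $0$, and competitiveness at $m>m_*$ only gives $\mu_m\ge2\pi$. The missing link is the uniform excess $\mu_{m_*}\ge2\pi+2\kappa\log(m/m_*)$, with $\kappa=m_*/\sqrt{1-m_*^2}$. It holds because every cap half-width grows by at least $\kappa\log(m/m_*)$ (since $h'=\cot h\ge\kappa$), and widening every interval of a finite family by $\epsilon$ widens the union by $2\epsilon$. ``Running the argument at $m=1/C$'' would need a certificate at every level $m>m_*$, or exactly this transfer.

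\emph{Tightness at the spiral.} This cannot be left to the exact jet of a fitted spline. At the spiral state the spiral heading keeps $(v,a,c)$ fixed with reward rate $k_*\delta_*+\alpha_*+\beta_*-2\pi=0$, which is the contact equation. So for any $V$ with $V_a\ge\fan$, the supremum over headings in the dissipation inequality on $c=h(v)$ is at least $(V_a-\fan)k_*\ge0$ there. A valid $V$ must therefore have $V_a=\fan$ and zero slack at that point exactly. A fitted spline misses both by its fit error, so its jet does not vanish and your Hessian step has nothing to start from. The paper builds exactness in: a block of coefficients constant in $a$ (so $G_a\equiv0$ near the spiral state), and one scalar coefficient correction fixing the value. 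The contact and pitch equations then kill the gradient. The Hessian of the slack on the surface is definite in both variables, since the spiral is a single fixed state with no degenerate directions.

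\emph{Dives and regularity.} Your impulse inequality is a condition on states only if each dive is charged for its duration. That requires a lower bound in terms of the angular advance $D$. The paper gets $2\operatorname{arsinh}(\kappa\sin\frac D2)$ in logarithmic time from the length of the unfolded planar curve $Re^{i\Theta}$, and the reward rate $q_+-2\pi$ is negative throughout the dive. The reduction to polygonal paths is also needed: for a general path $h(v_+)$ need not be of bounded variation, and then the chain rule behind the dissipation inequality fails.
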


\begin{figure}[t]
\centering
\includegraphics[width=0.62\linewidth]{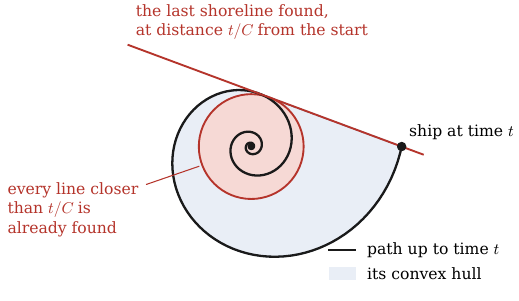}
\caption{The optimal spiral at time $t$. All shorelines at distance at most $t/\Csp$ have been found:
the disk of that radius lies in the convex hull of the path, and touches its boundary. The
supporting line there meets the hull at the ship and again one turn back; that double contact is
the contact equation \eqref{eq:contact} of \cref{def:constants}, and it is why the ship is
reaching this shoreline at exactly time $t$.}
\label{fig:spiral}
\end{figure}

\paragraph{Why decreasing radius is the difficulty.} A chord between two points at distance $R$
from the start and angular distance $\Delta$ has length $2R\sin(\Delta/2)<R\Delta$, and the interior of
a chord adds nothing to the convex hull. A chord is therefore a repositioning which is strictly
cheaper than any path of monotone radius, and no comparison with monotone paths is possible.
The proof below does not try to remove inward motion. It prices it.

\paragraph{Outline.} Write $m=1/C$. A path is competitive at level $m$ if at every time $t$ every
shoreline at distance $mt$ has been found.
\begin{enumerate}[leftmargin=2em,itemsep=1pt]
\item A competitive path can be replaced by a polygonal one, at an arbitrarily small loss
(\cref{sec:polygonal}).
\item A point at distance $R\ge D$ finds the shorelines at distance $D$ whose normals lie in an arc of
half-width $\arccos(D/R)$ about its polar angle. If these arcs cover the circle, the corresponding
intervals on the line, centred at \emph{any} lifts of the angles, have total length at least $2\pi$; and
replacing the lifted angle by its total variation (``unfolding'') only lengthens their union, by the
Kneser--Poulsen theorem on the line. So coverage forces the unfolded measure $\mu(\tau)\ge2\pi$, and
$\mu_{m_*}\ge2\pi+\mathrm{const}\cdot\log(m/m_*)$ for a path which is competitive at a level $m>m_*$
(\cref{sec:lift}).
\item In logarithmic coordinates every direction is covered from its first coverage until the
expiry of its best source. Because $\log\cos$ is concave, the best source is a monotone function of
the direction. This bounds $\int\mu\dd\tau$ by a \emph{reward}: a fan term paid when directions
are finalised, plus the distance between two monotone fronts (\cref{sec:book}).
\item The reward is a functional of a three-dimensional state $(v,a,c)$ whose dynamics are
controlled by the heading of the ship. One of the fronts is a running maximum; the Skorokhod
reflection lemma gives its dynamics (\cref{sec:traj}).
\item If a $C^1$ function $V$ satisfies four inequalities, then $V+\text{reward}$ does not increase along
any trajectory, and the mean of $\mu_{m_*}-2\pi$ is at most zero: a contradiction with step~2
(\cref{sec:verif,sec:proof}).
\item Such a $V$ is exhibited and verified rigorously (\cref{sec:cert}).
\end{enumerate}
The main text carries this line of argument and nothing else. Four appendices hold the technical
material it rests on: the measure-theoretic properties of the lift (\cref{app:lift}), the regularity
of the state (\cref{app:reg}), the details of the certificate (\cref{app:cert}), and a numerical
test of the whole reduction on explicit paths (\cref{app:test}).

The tools come from several fields: Kneser--Poulsen \cite{poulsen1954,kneser1955,bezdekconnelly2002},
monotone comparative statics and Monge arrays \cite{topkis1978,milgromshannon1994,aggarwal1987},
the Skorokhod problem \cite{skorokhod1961,lionssznitman1984}, dissipation inequalities and the
convex-duality view of optimal control \cite{willems1972,vinter1993}, and ball arithmetic
\cite{johansson2017arb}. The shape of the certificate --- a smooth strict subsolution which is tight
on one orbit --- is what weak KAM theory predicts for Tonelli systems \cite{fathisiconolfi2004,bernard2007};
our system is not of that type, and no theorem of that theory is used.

\section{Setting and the spiral constant}\label{sec:setting}

For a path $x$ put $h_t(n)=\max_{s\le t}\langle x(s),n\rangle$, the support function of
$\conv x([0,t])$ \cite{schneider2014}. Since $x$ is continuous and $x(0)=0$, the
line $L(n,d)$ has been found by time $t$ exactly when $h_t(n)\ge d$. Hence:

\begin{proposition}\label{prop:hull}
$\CR(x)\le C$ if and only if $h_t(n)\ge t/C$ for every $t>0$ and every $n\in\Sone$, that is, if and only
if the disk of radius $t/C$ about the start lies in $\conv x([0,t])$ for every $t$.
\end{proposition}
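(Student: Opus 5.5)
The plan is to make the sentence preceding the statement precise, and then add the standard convex-geometry translation of support functions into containment.

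First, I show that $T_x(n,d)\le t$ if and only if $h_t(n)\ge d$. If $h_t(n)\ge d$, then by compactness of $[0,t]$ and continuity of $s\mapsto\langle x(s),n\rangle$ the maximum is attained at some $s\le t$, so $T_x(n,d)\le s\le t$. Conversely, if $T_x(n,d)\le t$, the infimum in the definition of $T_x$ is attained by continuity. Since $\langle x(0),n\rangle=0<d$, the set $\{s:\langle x(s),n\rangle\ge d\}$ is closed, so its infimum $s_0$ satisfies $\langle x(s_0),n\rangle\ge d$, and hence $h_t(n)\ge d$. (If the set is empty, $T_x=\infty$ and the equivalence holds trivially, both sides being false.)

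Second, I prove the equivalence with the ratio condition. Suppose $\CR(x)\le C$. Fix $t>0$ and $n$, and set $d=t/C$. Then $T_x(n,d)\le Cd=t$, so $h_t(n)\ge t/C$. Conversely, suppose $h_t(n)\ge t/C$ for all $t$ and $n$. Given $(n,d)$, take $t=Cd$; then $h_{Cd}(n)\ge d$, so $T_x(n,d)\le Cd$. Taking the supremum over $(n,d)$ gives $\CR(x)\le C$. No strict-versus-weak issue arises, because the relevant infima and maxima are attained.

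Third, I prove the disk reformulation. The set $K_t=\conv x([0,t])$ is compact and convex, being the convex hull of a compact set in $\R^2$, and its support function is $h_t$: the support function of a set equals that of its convex hull. The disk $B(0,r)$ has support function identically $r$. By the standard fact that for compact convex sets $A\subseteq K$ if and only if $h_A\le h_K$ pointwise (a consequence of Hahn--Banach separation, \cite{schneider2014}), the condition $h_t(n)\ge t/C$ for all $n$ is equivalent to $B(0,t/C)\subseteq K_t$.

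The only mildly delicate point is the attainment argument in the first step, which is what keeps the sup/inf conditions non-strict; everything else is routine.
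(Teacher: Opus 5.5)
Your proof is correct and is the paper's argument written out in full: the paper gets the proposition from the single observation that, by continuity and $x(0)=0$, the line $L(n,d)$ is found by time $t$ exactly when $h_t(n)\ge d$, together with the fact that $h_t$ is the support function of $\conv x([0,t])$. One small wording slip: the set $\{s:\langle x(s),n\rangle\ge d\}$ is closed by continuity alone, not because $\langle x(0),n\rangle<d$.
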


\begin{definition}\label{def:competitive}
A path is \emph{competitive at level $m$} if $h_t(n)\ge mt$ for all $t>0$ and $n\in\Sone$.
\end{definition}

The arguments below use this condition only for large $t$, so they also apply if distances are
restricted to $d\ge1$, or if an additive constant is allowed in the definition of the ratio.

\paragraph{Notation.} Throughout, $m=1/C$ is a level, $h(s)=\arccos(e^{-s})$ for $s\ge0$, and for a
point of the path at time $t$ and distance $R$ from the start
\[
 \tau=\log t,\qquad z=\log R,\qquad v=z-\tau-\log m=\log\frac{R}{mt},\qquad P=\frac tR=\frac{e^{-v}}{m}.
\]
The point is outside the guaranteed disk of radius $mt$ exactly when $v\ge0$, and
$v\le\log(1/m)$ because $R\le t$. We write $x_+=\max(x,0)$.

\begin{definition}[Spiral constants]\label{def:constants}
For $k>0$ and $0<m<(1+k^2)^{-1/2}$ let
\[
 \alpha=\arctan\frac1k,\qquad \beta=\arccos\bigl(m\sqrt{1+k^2}\bigr),\qquad
 \delta=\log\frac{k}{m(1+k^2)} .
\]
The pair $(k_*,m_*)$ is the unique solution, in the box $|k-4.7066|\le0.05$, $|m-0.072405|\le5\cdot10^{-4}$, of
\begin{align}
  \beta+k\,\delta+\alpha&=2\pi, &&\text{(contact)}\label{eq:contact}\\
  \delta\,(1+k^2)&=k\,(k+\cot\beta). &&\text{(optimal pitch)}\label{eq:pitch}
\end{align}
We write $\alpha_*,\beta_*,\delta_*$ for the values at $(k_*,m_*)$, $P_*=\sqrt{1+k_*^2}$,
$\kappa=m_*/\sqrt{1-m_*^2}$, and $\Csp=1/m_*$.
\end{definition}

Existence and uniqueness in the box are certified by an interval Newton step
(\file{scripts/check_spiral_constants_box.py});
$m_*=0.0724053444561982\ldots$, $\alpha_*=0.2093562673\ldots$, $\beta_*=1.2149466769\ldots$.

\begin{proposition}[The spiral]\label{prop:spiral}
The logarithmic spiral $R=e^{\theta/k}$, traversed at unit speed, is competitive at level $m$ if and
only if $\beta+k\delta+\alpha\ge2\pi$. The largest such level, over all $k$, is $m_*$, attained at $k_*$.
In particular $C_*\le\Csp$.
\end{proposition}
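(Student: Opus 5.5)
By \cref{prop:hull} and scale invariance, the plan is to reduce competitiveness of the spiral to one worst-case supporting line, identify that line as the one making double contact with the hull one turn apart (\cref{fig:spiral}), and then maximise over $k$.

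\emph{Setup.} Parametrise the spiral $R=e^{\theta/k}$ by $\theta$. The arc length from the origin to angle $\theta$ is
\[
t(\theta)=\int_{-\infty}^{\theta}\sqrt{R^2+R'^2}\dd\theta'=\sqrt{1+k^2}\,R(\theta)=P_*R
\]
(with general $k$), so $t=\sqrt{1+k^2}\,R$. The spiral is self-similar: rotating by $\phi$ scales it by $e^{\phi/k}$ and scales time by the same factor. So the ratio $h_t(n)/t$ depends only on the relative angle between $n$ and the ship's current polar angle $\theta$. It therefore suffices to fix $t$ (equivalently $\theta$) and find the minimum over $n$ of $h_t(n)$.

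\emph{Support function.} For a direction $n$ at angle $\theta-\psi$ with $\psi\ge0$, we have $h_t(n)=\max_{\theta'\le\theta}e^{\theta'/k}\cos(\theta'-\theta+\psi)$. On each turn the function $f(\theta')=e^{\theta'/k}\cos(\theta'-\phi_n)$ has critical points where $\tan(\theta'-\phi_n)=1/k$, that is, where the spiral's tangent is perpendicular to $n$. The tangent makes angle $\alpha=\arctan(1/k)$ with the radial direction.

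So $h_t(n)$ is either the value at the endpoint $\theta'=\theta$, which is $R\cos\psi$, or the value at the last interior maximum, which is $e^{\theta_1/k}\cos\alpha$ with $\theta_1=\phi_n+\alpha-2\pi j$ the most recent such point. As $\psi$ increases from $0$, the endpoint value $R\cos\psi$ decreases while the previous-turn tangency value $R e^{(\alpha-\psi)/k}\cos\alpha\,e^{-2\pi/k}$ varies with $\psi$.

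The minimum over $n$ of $h_t(n)$ therefore occurs either at the crossing of these two branches or at the boundary where the endpoint ceases to be the maximiser. In the first case the supporting line touches both the ship and the point one turn back, which is the double contact of \cref{fig:spiral}.

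\emph{Reducing to the contact inequality.} Write the condition $\min_n h_t(n)\ge mt=m\sqrt{1+k^2}\,R$.
\begin{itemize}
\item On the endpoint branch, $R\cos\psi\ge m\sqrt{1+k^2}R$ holds exactly for $\psi\le\beta$, by the definition of $\beta$.
\item On the tangency branch, I need $e^{(\alpha-\psi)/k-2\pi/k}\cos\alpha\ge m\sqrt{1+k^2}$. Using $\cos\alpha=k/\sqrt{1+k^2}$, this is $(\alpha-\psi-2\pi)/k+\log\frac{k}{m(1+k^2)}\ge0$, i.e.\ $\psi\le\alpha+k\delta-2\pi$ after correcting the sign convention for the previous turn.
\end{itemize}
The two branches together cover all $\psi\in[0,2\pi)$ exactly when the gap closes, namely $\beta+k\delta+\alpha\ge2\pi$. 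I would need to check carefully that the branch indices match (previous turn versus current turn) and that $\psi=0$ is not a problem. The latter holds because $\cos 0=1>m\sqrt{1+k^2}$, which is given.

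\emph{Optimising over $k$.} For fixed $k$, the left side $F(k,m)=\beta+k\delta+\alpha$ is strictly decreasing in $m$, since both $\beta$ and $\delta$ decrease in $m$. Hence the largest admissible level $m(k)$ solves $F=2\pi$, which is \eqref{eq:contact}. Maximising $m(k)$ requires $\partial_kF=0$ at fixed $m$. Using $\partial_k\beta=-\frac{mk}{\sqrt{1+k^2}\sin\beta}$ and $\cos\beta=m\sqrt{1+k^2}$, the derivative reduces to $\delta+k\bigl(\frac1k-\frac{2k}{1+k^2}\bigr)-\frac{1}{1+k^2}-\frac{k\cot\beta}{1+k^2}=0$, which should simplify to \eqref{eq:pitch}.

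\emph{Expected obstacle.} The main obstacle is that this only identifies a critical point. Showing it is the global maximum over all admissible $k$ would need a monotonicity or concavity argument for $m(k)$, or failing that an interval-arithmetic sweep over $k$. I would first try the sweep, combined with the interval Newton certificate already cited for the box. The conclusion $C_*\le\Csp$ is then immediate from \cref{prop:hull}.
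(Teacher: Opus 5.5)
Your optimisation over $k$ is correct: the derivative simplifies to $\delta-k(k+\cot\beta)/(1+k^2)$, exactly as in the paper. The paper also stops at the critical point and cites global maximality as classical. The problem is the coverage step, which carries the whole ``if and only if''; as written it does not prove it.

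With $\psi\in[0,2\pi)$ the angle by which $n$ lies \emph{behind} the ship, your two branches give $\psi\le\beta$ and $\psi\le\alpha+k\delta-2\pi$. Both are upper bounds on $\psi$, so together they cover only the initial segment $[0,\max(\beta,\alpha+k\delta-2\pi)]$. The criterion $\beta+k\delta+\alpha\ge2\pi$ is therefore announced, not derived. Two things went wrong.

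\begin{itemize}
\item $\cos\psi\ge\cos\beta$ also holds for $\psi\in[2\pi-\beta,2\pi)$. The current point covers the directions up to $\beta$ \emph{ahead} of the ship, and that is the half which closes the gap.
\item For $\psi\ge\alpha$ the most recent tangency point is $\theta_1=\theta-\psi+\alpha$, on the current turn, with value $Re^{(\alpha-\psi)/k}\cos\alpha$. The previous-turn point you used is the most recent one only for $\psi<\alpha$. Your expression $Re^{(\alpha-\psi-2\pi)/k}\cos\alpha$ is the right one if $-\psi$ is read as the angle ahead of the ship; this is presumably the sign convention you left unresolved.
\end{itemize}

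The repair is short. Three facts give the formula below:
\begin{itemize}
\item $\log f$ is concave on each lobe $|\theta'-\phi_n|<\pi/2$;
\item the tangency values grow by $e^{2\pi/k}$ per turn;
\item $f\to0$ as $\theta'\to-\infty$.
\end{itemize}
Hence, for $\psi\in[\alpha,2\pi)$,
\[
 h_t(n)=\max\bigl(R\cos\psi,\ Re^{(\alpha-\psi)/k}\cos\alpha\bigr),\qquad mt=R\cos\beta .
\]
The second term is at least $mt$ iff $\psi\le\alpha+k\delta$. The first is at least $mt$ iff $\psi\le\beta$ or $\psi\ge2\pi-\beta$.

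\emph{Case $\delta\ge0$.} Then $\alpha\le\beta$, since both conditions say $\cos\alpha\ge m\sqrt{1+k^2}$. So the directions with $\psi<\alpha$ are covered by the current point. The covered set is $[0,\max(\beta,\alpha+k\delta)]\cup[2\pi-\beta,2\pi)$, and everything is covered iff $\alpha+k\delta\ge2\pi-\beta$. For $\psi$ in the complementary gap both terms are below $mt$, which gives the ``only if''.

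\emph{Case $\delta<0$.} The direction $\psi=\pi$ is uncovered, and $\beta+k\delta+\alpha<\pi$, so the equivalence holds trivially.

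This is the paper's computation seen from the other side. The paper fixes the source rather than the direction: it sends the past point at angle $\theta-u$ to the lifted cap of half-width $h(v-u/k)$. These caps form a single interval of the line, from $\theta-(k\delta+\alpha)$ to $\theta+\beta$. The maximiser $u=k\delta$, where $h=\alpha$, is your tangency point, and $\theta+\beta$ is the current point's reach ahead. Covering the circle is then just ``length $\ge2\pi$'', and the mod-$2\pi$ branch bookkeeping where your signs went astray never arises.

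Your proposed interval sweep for global maximality would go beyond the paper. It can be confined to a compact range of $k$: $k\delta\ge2\pi-\alpha-\beta>\pi$ forces $m<\frac{k}{1+k^2}e^{-\pi/k}$, which is below $m_*$ for $k\le1$ and for $k\ge14$.
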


\begin{proof}
On the spiral $t=\sqrt{1+k^2}\,R$, so every point has the same $v$, with $e^{-v}=m\sqrt{1+k^2}=\cos\beta$.
By \cref{sec:lift}, the point at polar angle $\theta-u$, $u\ge0$, finds at time $t$ the directions
$\varphi$ with $|\varphi-(\theta-u)|\le h(v-u/k)$, as long as $u\le kv$. The polar angle is increasing, so
these intervals of the line form an interval with right end $\theta+\max_u[h(v-u/k)-u]=\theta+\beta$ and left
end $\theta-\max_u[u+h(v-u/k)]$. Since $h'=\cot h$, the last maximum is attained where $h=\alpha$,
that is at $u=k\delta$ with $\delta=v+\log\cos\alpha$, and equals $k\delta+\alpha$. An interval of the line
covers the circle if and only if its length is at least $2\pi$. The length $\beta+k\delta+\alpha$
decreases in $m$; along \eqref{eq:contact}, $\mathrm{d}m/\mathrm{d}k=0$ is equivalent to
$\partial_k(\beta+k\delta+\alpha)=\delta-k(k+\cot\beta)/(1+k^2)=0$, which is \eqref{eq:pitch}.
That this critical point is the maximum of $m$ over $k$ is classical \cite{baezayates1993,finchzhu2016}.
\end{proof}

Only the inequality $C_*\ge\Csp$ needs proof. The two equations reappear in \cref{lem:jet}: they
are exactly what makes the certificate tight at the spiral.

\section{Reduction to polygonal paths}\label{sec:polygonal}

\begin{lemma}\label{lem:polygonal}
Let $x$ be competitive at level $m$ and $0<\eta<m$. Put $t_j=(1+\eta)^j$, $j\in\Z$, and let $y$ be the
path with $y(t_j)=x(t_j)$ which runs along each chord $[x(t_j),x(t_{j+1})]$ at constant speed. Then
$y$ is a search path, it has finitely many vertices in every compact interval of $(0,\infty)$, and it is
competitive at level $m_\eta=(m-\eta)/(1+\eta)$.
\end{lemma}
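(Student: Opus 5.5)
We check the three claims about $y$ in order. The first two are routine, and the real content is the competitiveness estimate.

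\textbf{Search path.} Because $x$ is $1$-Lipschitz, $|x(t_{j+1})-x(t_j)|\le t_{j+1}-t_j$. So each chord is traversed at speed at most $1$, and $y$ is $1$-Lipschitz on every $[t_j,t_{j+1}]$, hence on $(0,\infty)$ by concatenation. As $j\to-\infty$ we have $t_j\to0$ and $|y(t)|\le\sup_{s\le t_{j+1}}|x(s)|\le t_{j+1}\to0$. Setting $y(0)=0$ therefore gives a continuous $1$-Lipschitz map on $[0,\infty)$ with $y(0)=0$.

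\textbf{Finitely many vertices.} A compact interval $[a,b]\subset(0,\infty)$ contains only the finitely many $t_j$ with $\log a/\log(1+\eta)\le j\le\log b/\log(1+\eta)$. Hence $y$ has finitely many vertices there.

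\textbf{Competitiveness.} The hull of $y([0,t])$ contains the points $x(t_i)$ for $t_i\le t$, and, by convexity and continuity, also $x(0)=0$. The key step is to compare the support function of $y$ with that of $x$ at the grid times.

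Fix $t>0$ and choose $j$ with $t_j\le t<t_{j+1}$. For any $s\le t_j$ pick $i$ with $t_i\le s\le t_{i+1}$, where $i+1\le j$. Then
\[
\langle x(s),n\rangle\le\langle x(t_{i+1}),n\rangle+(t_{i+1}-s)\le h^y_t(n)+\eta t_i\le h^y_t(n)+\eta t_j .
\]
Here $h^y_t$ is the support function of $y$ up to time $t$, and we used $t_{i+1}-t_i=\eta t_i$.

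Taking the supremum over $s\le t_j$ and applying competitiveness of $x$ at time $t_j$ gives
\[
h^y_t(n)\ge h^x_{t_j}(n)-\eta t_j\ge(m-\eta)t_j .
\]
Since $t<t_{j+1}=(1+\eta)t_j$, we have $t_j>t/(1+\eta)$, and so
\[
h^y_t(n)\ge\frac{m-\eta}{1+\eta}\,t=m_\eta t .
\]
The hypothesis $\eta<m$ ensures $m_\eta>0$. This holds for all $t>0$ and all $n\in\Sone$, which is exactly \cref{def:competitive} at level $m_\eta$.

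The main point is the one-step bound $t_{i+1}-s\le\eta t_i$ together with the geometric grid. The grid makes the additive loss proportional to $t$, so that it becomes a loss in the level rather than in the constant. No earlier result beyond the definitions and \cref{prop:hull} is needed.
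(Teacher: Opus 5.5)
Your proof is correct and follows the paper's argument essentially line by line. The chord-length bound gives the Lipschitz property; for $s\le t_j$ you compare $\langle x(s),n\rangle$ with a grid vertex already visited by time $t$, losing at most $\eta t_j$, and then use $t_j>t/(1+\eta)$. The only difference is cosmetic: you compare with the right endpoint $x(t_{i+1})$ of the grid interval containing $s$, which is why you need $i+1\le j$, whereas the paper uses the left endpoint. The case $s=0$, where no grid interval contains $s$, is covered by $0=y(0)\in\conv y([0,t])$, as you note.
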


\begin{proof}
A chord is not longer than $t_{j+1}-t_j$, so $y$ is $1$-Lipschitz, and $|y(t)|\le t$ gives continuity
at $0$. Let $t_i\le t<t_{i+1}$ and $0<s\le t_i$, say $t_j\le s\le t_{j+1}$ with $j<i$. Then
\[
 \langle x(s),n\rangle\le\langle x(t_j),n\rangle+(s-t_j)\le h^y_t(n)+\eta\,t_j\le h^y_t(n)+\eta\,t_i ,
\]
and $\langle x(0),n\rangle=0\le h^y_t(n)$. Hence
$h^y_t(n)\ge h^x_{t_i}(n)-\eta t_i\ge(m-\eta)t_i\ge m_\eta t$.
\end{proof}

This is \lean{Shore.polygonal_reduction}, for $1$-Lipschitz paths in any real inner product space.
Since $m_\eta\to m$, a path which is competitive at a level above $m_*$ yields a polygonal one with
the same property. \textbf{From now on paths are polygonal, with vertex times $t_j$.} Their speed
may be below one.

\section{From planar coverage to lifted measure}\label{sec:lift}

Fix a query $(t,D)$ with $D=mt$. A point with polar coordinates $(R,\theta)$ lies on or beyond the line
$L(n_\varphi,D)$, $n_\varphi=(\cos\varphi,\sin\varphi)$, if and only if $R\cos(\varphi-\theta)\ge D$. So a point
with $R\ge D$ (a \emph{live source}) finds the closed arc of directions
\[
  \{\varphi:\ |\varphi-\theta|\le\arccos(D/R)\}\pmod{2\pi},
\]
its \emph{cap}, and a point with $R<D$ finds nothing.

\begin{lemma}[Vertices suffice]\label{lem:vertices}
For a polygonal path the set of directions found at time $t$ is the union of the caps of the vertices
visited so far and of the current point. Only finitely many of them are live.
\end{lemma}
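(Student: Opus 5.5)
The plan is to reduce everything to the convexity of the found set along a segment. A direction $\varphi$ is found at time $t$ exactly when $h_t(n_\varphi)\ge D$, where $D=mt$. By the discussion before the lemma, this holds exactly when some point $x(s)$ with $s\le t$ lies in the closed half-plane $H_\varphi=\{y:\langle y,n_\varphi\rangle\ge D\}$. Equivalently, $\varphi$ lies in the cap of some point visited by time $t$. One inclusion of the claim is immediate: vertices visited so far and the current point are all visited points, so their caps consist of found directions. For the converse, I take any $s\le t$ and show that the cap of $x(s)$ lies in the union of the caps of the endpoints of the segment containing $s$, truncated at $t$ if that segment is the current one.

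The key step is this. Let $s\in[t_j,t_{j+1}]$, put $a=x(t_j)$, and put $b=x(\min(t_{j+1},t))$. Then $x(s)$ is a convex combination of $a$ and $b$. The map $y\mapsto\langle y,n_\varphi\rangle$ is linear, so its value at $x(s)$ is at most $\max(\langle a,n_\varphi\rangle,\langle b,n_\varphi\rangle)$. Hence if $x(s)\in H_\varphi$, then $a\in H_\varphi$ or $b\in H_\varphi$, and $\varphi$ is in the cap of $a$ or of $b$. The point $a$ is a visited vertex. The point $b$ is either a visited vertex (when $t_{j+1}\le t$) or the current point $x(t)$.

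Two small points need care. First, the start $x(0)=0$ is not among the vertices $t_j=(1+\eta)^j$, since these accumulate at $0$. Its cap is empty, though, because $R=0<D$. Every $s>0$ lies in some $[t_j,t_{j+1}]$, so all positive times are covered by segments. Second, the caps of non-live points are empty, so they may be included in or dropped from the union freely.

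For finiteness, it suffices to check that only finitely many vertices are live. A vertex $x(t_j)$ with $t_j<D$ has $|x(t_j)|\le t_j<D=mt$, so it is not live. Indeed, any vertex with $t_j<mt$ is dead. The live vertices therefore have $t_j\in[mt,t]$, a compact subinterval of $(0,\infty)$, and by \cref{lem:polygonal} only finitely many vertices lie there. Adding the current point keeps the count finite.

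I expect no real obstacle. The only delicate point is the accumulation of vertices at $0$, and the bound $|x(s)|\le s$ disposes of it.
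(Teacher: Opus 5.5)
Your proof is correct and follows the paper's argument. A linear functional on a segment is maximised at an endpoint (with the current segment truncated at $t$), and $R(s)\le s$ confines the live vertices to the compact interval $[mt,t]$, which contains only finitely many vertices; your remarks on the start point $x(0)=0$ and on the accumulation of the $t_j$ at $0$ spell out details the paper leaves implicit.
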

\begin{proof}
A linear function on a segment is largest at an endpoint (\lean{Shore.segment_support_le_max}).
Since $R(s)\le s$, a live vertex has $t_j\ge mt$.
\end{proof}

\begin{lemma}[Lifting and unfolding]\label{lem:KP}
Let finitely many live sources $(R_i,\theta_i)$ find every direction, let $\tilde\theta_i\in\R$ be arbitrary
lifts of their polar angles, and let $\Theta_i\in\R$ satisfy $|\tilde\theta_i-\tilde\theta_j|\le|\Theta_i-\Theta_j|$
for all $i,j$. Then
\[
  \Bigl|\,\bigcup_i\bigl[\Theta_i-\arccos(D/R_i),\ \Theta_i+\arccos(D/R_i)\bigr]\Bigr|\ \ge\ 2\pi .
\]
\end{lemma}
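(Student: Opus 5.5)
Write $r_i=\arccos(D/R_i)\in[0,\pi/2)$, $I_i=[\tilde\theta_i-r_i,\tilde\theta_i+r_i]$ and $J_i=[\Theta_i-r_i,\Theta_i+r_i]$. The proof has two steps. First, the union of the lifted intervals already has length at least $2\pi$. Second, replacing the centres $\tilde\theta_i$ by the more spread-out centres $\Theta_i$ can only lengthen the union; this is the one-dimensional Kneser--Poulsen statement.

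\emph{Step 1 (coverage gives measure $2\pi$ on the line).} The projection $p\colon\R\to\Sone$, $p(y)=y \bmod 2\pi$, carries $I_i$ onto the cap of source $i$. By hypothesis these caps cover $\Sone$, so $p\bigl(\bigcup_i I_i\bigr)=\Sone$. On each interval of length less than $2\pi$ the map $p$ does not increase length, because it is locally measure preserving. The union $\bigcup_i I_i$ is a finite union of intervals, so $p$ does not increase its measure either:
\[
  2\pi=|p(\textstyle\bigcup I_i)|\le|\textstyle\bigcup I_i|.
\]
This is pure bookkeeping, and it is insensitive to which lifts were chosen.

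\emph{Step 2 (unfolding, Kneser--Poulsen on $\R$).} We need $\bigl|\bigcup J_i\bigr|\ge\bigl|\bigcup I_i\bigr|$ whenever the centres are pairwise expanded, $|\tilde\theta_i-\tilde\theta_j|\le|\Theta_i-\Theta_j|$. On the line this has an elementary proof, and I would not cite the general theorem.

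First reduce to a consistent ordering. I would try to show that the order of the $\Theta_i$ can be taken to agree with that of the $\tilde\theta_i$, up to a global reflection. If it cannot, the continuous-motion argument below still works, because it never uses the ordering.

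The main route is Csik\'os-type interpolation, which is trivial in dimension one. Embed both configurations in $\R^2$ as $\tilde\theta_i\mapsto(\tilde\theta_i,0)$ and $\Theta_i\mapsto(\Theta_i,0)$. The classical fact is that an expanded configuration in $\R^n$ connects to the original by a continuous expansion in $\R^{2n}$, for instance $\bigl(\cos(\pi s/2)\,\tilde\theta_i,\ \sin(\pi s/2)\,\Theta_i\bigr)$. In the lifted space that makes the result a direct citation of Bezdek--Connelly \cite{bezdekconnelly2002}, whose theorem for $\R^{d}$ with the configuration expanded covers the line.

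For a self-contained version I would use the direct formula instead. Sort the centres of $J$, write $\Theta_{(1)}\le\dots\le\Theta_{(N)}$ with radii $r_{(k)}$, and express the union length as a sum of gaps: $2r_{\max}$-type terms plus $\sum_k\min\bigl(\Theta_{(k+1)}-\Theta_{(k)},\ \text{overlap-limited length}\bigr)$. A cleaner formulation is to induct on $N$. Remove the interval whose right end is maximal in the $I$-configuration. Its new contribution to the union is $\min_j\bigl(\text{right end}_i-\text{right end}_j\text{ type distances}\bigr)$, a minimum of expressions $|\tilde\theta_i-\tilde\theta_j|+r_i-r_j$ capped at $2r_i$. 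Each such expression only grows under expansion, while in the $J$-configuration the marginal contribution of interval $i$ is at least the same minimum formed with $|\Theta_i-\Theta_j|$.

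\emph{Main obstacle.} The hard step is making this marginal-contribution induction rigorous when the orderings differ: the interval that is extremal in $I$ need not be extremal in $J$, so the marginal gain of $J_i$ is not simply a minimum over its neighbours. That is why I would first try the citation via continuous expansion in a higher dimension, where the statement is exactly the Kneser--Poulsen theorem proved for continuous contractions. The line case is also the Lean-formalizable one, so I expect the paper proves it by the sorted-gap formula under a measurable reindexing.
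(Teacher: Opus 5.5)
\textbf{Step 1.} Your Step 1 is the same as the paper's first step. The paper phrases it as: the $2\pi$-translates of $\bigcup_i I_i$ cover $\R$, so $|\bigcup_i I_i|\ge2\pi$.

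\textbf{Step 2 via the citation.} Here your proposal is complete only as a citation. The citation itself is sound, but not for the reason you give. Bezdek and Connelly show that a continuous expansion in $\R^{d+2}$ suffices for the $d$-dimensional union. Your leapfrog motion $\bigl(\cos(\pi s/2)\tilde\theta_i,\sin(\pi s/2)\Theta_i\bigr)$ lives in $\R^2\subset\R^3$, so their theorem does cover the line, with arbitrary radii.

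It is not true, however, that the line case is ``exactly the Kneser--Poulsen theorem proved for continuous contractions'', or that the interpolation is ``trivial in dimension one''. Applied to a motion in $\R^2$, Csik\'os's formula controls the \emph{area} of a union of discs, not the length of $\bigcup_i J_i$. Getting back to a statement on the line is the substance of the Bezdek--Connelly theorem. (Inside $\R$ itself no continuous expansion can swap two distinct centres, which is why the motion has to leave the line at all.)

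Your preliminary reduction to a common ordering is false in general. Move the centres $0,1,2$ to $0,10,5$: every pairwise distance increases, yet the middle index changes, and no reflection undoes that.

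\textbf{The paper's elementary route.} The paper proves Step 2 directly, and its argument supplies exactly the idea your marginal-contribution induction lacks: let the \emph{expanded} configuration drive the induction, and use only subadditivity on the original one. Induct on the number of intervals.
\begin{itemize}
\item If $\bigcup_i J_i$ has a gap, split the indices into those whose $J_i$ lie left of it and those whose $J_i$ lie right of it. Then $|\bigcup_i J_i|$ is the sum of the two sub-unions. By induction each dominates the corresponding sub-union of the $I_i$, and $|\bigcup_i I_i|$ is at most the sum of those two.
\item If $\bigcup_i J_i$ has no gap, it is an interval of length $\max_{i,j}(|\Theta_i-\Theta_j|+r_i+r_j)$. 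This is at least $\max_{i,j}(|\tilde\theta_i-\tilde\theta_j|+r_i+r_j)$, the length of the convex hull of $\bigcup_i I_i$, and hence at least $|\bigcup_i I_i|$.
\end{itemize}
No ordering of either configuration is ever compared. So the obstacle you identify, that an interval extremal for $I$ need not be extremal for $J$, never arises. Compared with the citation, this costs four lines instead of a deep theorem. It handles arbitrary radii and reorderings directly, and it is the form the paper formalizes in Lean (\lean{kneser_poulsen_line}). It is not a sorted-gap formula under a reindexing, as you guessed.
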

\begin{proof}
A cap is the projection of the interval $[\tilde\theta_i\pm\arccos(D/R_i)]$. The $2\pi$-translates of
the union $U$ of these intervals cover the line, so $|U|\ge2\pi$: the projection to the circle does
not increase measure. Finally, if the centres of finitely many intervals are moved so that no
pairwise distance decreases, the length of the union does not decrease. This is the
one-dimensional Kneser--Poulsen theorem, for arbitrary radii: if the new union has a gap, split the
family there and use induction; otherwise both unions are compared with the hull length
$\max_{i,j}(|x_i-x_j|+r_i+r_j)$, which is monotone in the distances.
\end{proof}
The three steps are \lean{exists_int_abs_sub_le_arccos}, \lean{volume_ge_of_periodic_cover} and
\lean{kneser_poulsen_line}; together, \lean{Shore.unfolded_measure_ge_of_planar_cover}.

\paragraph{The unfolding.} Along a polygonal path choose a lift $\tilde\theta$ of the polar angle
which is continuous while $R>0$, right-continuous, and jumps --- only at a passage through the
origin or a departure from a wait there --- by at most $\pi$ in absolute value; \cref{app:lift}
gives the construction and checks that $\tilde\theta$ has locally bounded variation. Let $\Theta$ be
its signed variation function relative to a fixed $s_0>0$,
\[
 \Theta(s)=\Var(\tilde\theta;[s_0,s])\ \ (s\ge s_0),\qquad \Theta(s)=-\Var(\tilde\theta;[s,s_0])\ \ (s<s_0).
\]
Radius, speed and clock are unchanged, and $|\tilde\theta(s)-\tilde\theta(u)|\le|\Theta(s)-\Theta(u)|$
(\lean{Shore.unfolding_is_expansion}), so \cref{lem:KP} applies to the live vertices with
$\Theta_i=\Theta(s_i)$; enlarging the family to all live points of the trajectory only enlarges the
union.

\begin{lemma}[Where $\Theta$ jumps]\label{lem:jumps}
$\Theta$ is nondecreasing and right-continuous, and it is discontinuous only at times with $R=0$.
Every jump has size at most $\pi$.
\end{lemma}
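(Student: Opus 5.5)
The plan is to read off all four claims from the definition of $\Theta$ as a signed variation function, together with the structure of the lift $\tilde\theta$ fixed in the unfolding paragraph.

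\emph{Monotonicity.} For $s_0\le s<s'$, additivity of variation over adjacent intervals gives $\Theta(s')-\Theta(s)=\Var(\tilde\theta;[s,s'])\ge0$. For $s<s_0\le s'$ the difference is $\Var(\tilde\theta;[s,s_0])+\Var(\tilde\theta;[s_0,s'])=\Var(\tilde\theta;[s,s'])$, and the case $s<s'<s_0$ is the same identity again. So in every case $\Theta(s')-\Theta(s)=\Var(\tilde\theta;[s,s'])\ge0$. Local bounded variation, which \cref{app:lift} supplies, makes all these quantities finite.

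\emph{One-sided limits and jump sizes.} I will use the standard fact that, for a function of locally bounded variation, the increment $\Var(\tilde\theta;[s,s'])$ tends to $|\tilde\theta(s^+)-\tilde\theta(s)|$ as $s'\downarrow s$, and to $|\tilde\theta(s)-\tilde\theta(s^-)|$ as the left endpoint increases to $s$. Since $\tilde\theta$ is right-continuous, $\Theta$ is right-continuous. The jump of $\Theta$ at $s$ equals $|\tilde\theta(s)-\tilde\theta(s^-)|$, the jump of $\tilde\theta$ there.

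If I prefer not to cite the standard fact, I would prove it directly. Let $\varepsilon>0$. Take a partition of $[s,s']$ whose sum is within $\varepsilon$ of $\Var(\tilde\theta;[s,s'])$, and refine it by inserting a point $s_1$ close to $s$. The first increment $|\tilde\theta(s_1)-\tilde\theta(s)|$ is small by right-continuity. The remaining part of the sum is at most $\Var(\tilde\theta;[s_1,s'])$, and this also tends to $0$. The key point is that $\inf_{s_1>s}\Var(\tilde\theta;(s,s_1])=0$, which follows because the variation over $[s,s']$ is finite and $(s,s']$ is the decreasing union of the sets $(s_1,s']$. This routine measure-of-variation argument is the only real work in the proof.

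\emph{Location and size of jumps.} Combining the above, $\Theta$ can be discontinuous only where $\tilde\theta$ is. By construction, $\tilde\theta$ is continuous on every interval where $R>0$, and it jumps only at a passage through the origin or at a departure from a wait there. Both happen at times with $R=0$. At a departure the right-continuous lift takes its new value at times where $R>0$ immediately after, but the discontinuity itself sits at the last time with $R=0$. Finally, each jump of $\tilde\theta$ has absolute value at most $\pi$ by the choice of the lift, so each jump of $\Theta$ is at most $\pi$.

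The main obstacle is bookkeeping rather than mathematics. The statement relies on the properties of the lift established in \cref{app:lift}: continuity while $R>0$, right-continuity, jumps bounded by $\pi$, and local bounded variation. My proof will invoke these directly, and the one analytic step I would write out carefully is the right-continuity of the variation function described above.
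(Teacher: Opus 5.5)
Your proposal is correct and follows the paper's argument: continuity of $\tilde\theta$ where $R>0$ passes to its variation function $\Theta$, and a jump of $\Theta$ equals the absolute jump of $\tilde\theta$, which is at most $\pi$ by the choice of lift. You also spell out the monotonicity and right-continuity that the paper leaves implicit. One small wording slip: as $s_1\downarrow s$ the sets $(s_1,s']$ \emph{increase} to $(s,s']$, and it is this, together with additivity at $s_1$, that gives $\Var(\tilde\theta;(s,s_1])\to0$.
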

\begin{proof}
$\tilde\theta$ is continuous wherever $R>0$, so its variation $\Theta$ is continuous there too; and
$\Theta$ jumps by the absolute value of the jump of $\tilde\theta$.
\end{proof}

Every later step which meets a jump of $\Theta$ uses \cref{lem:jumps} in the same way: at a jump the
radius is zero, so the point covers nothing and is not a source. Note also that a zero-radius time
is always strictly inside a \emph{dive} (\cref{sec:traj}). With
\[
 \mu_m(\tau)=\Bigl|\bigcup_{s\le t,\ R(s)\ge mt}\bigl[\Theta(s)-\arccos\tfrac{mt}{R(s)},\ \Theta(s)+\arccos\tfrac{mt}{R(s)}\bigr]\Bigr|
\]
we have proved:

\begin{proposition}\label{prop:mu}
If a polygonal path has found every shoreline at distance $mt$ by time $t=e^\tau$, then
$\mu_m(\tau)\ge2\pi$. No assumption is made on the radius or on the angle.
\end{proposition}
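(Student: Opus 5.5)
The plan is to assemble \cref{prop:mu} from \cref{lem:vertices}, \cref{lem:KP} and the unfolding, checking along the way that none of them needs monotone radius or monotone angle.

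Fix $t=e^\tau$ and $D=mt$. By hypothesis every line $L(n_\varphi,D)$ has been found by time $t$, so by \cref{prop:hull} we have $h_t(n_\varphi)\ge D$ for every $\varphi$. By \cref{lem:vertices}, each direction $\varphi$ then lies in the cap of some point $x(s_i)$, $s_i\le t$, which is either a vertex or the current point. That point is live, since a non-live point finds nothing. Only finitely many of these points are live, so we obtain a finite family of live sources $(R_i,\theta_i)=(R(s_i),\theta(s_i))$ whose caps cover the circle.

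If $R(s_i)=0$ the point cannot be live, because $D>0$. So $\tilde\theta$ is defined at every source, and we take $\tilde\theta_i=\tilde\theta(s_i)$ as the lifts of the polar angles and $\Theta_i=\Theta(s_i)$.

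The expansion property $|\tilde\theta(s)-\tilde\theta(u)|\le|\Theta(s)-\Theta(u)|$ holds for the variation function whether or not $\tilde\theta$ is monotone, and whether or not it jumps. For $u<s$ it is the trivial bound $|\tilde\theta(s)-\tilde\theta(u)|\le\Var(\tilde\theta;[u,s])=\Theta(s)-\Theta(u)$, which is additive across $s_0$. The only inputs are that $\tilde\theta$ has locally bounded variation, which is \cref{app:lift}, and that the lift is a genuine lift of the polar angle at every source. The latter holds because sources have $R>0$; this is where \cref{lem:jumps} is used.

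\Cref{lem:KP} now gives
\[
\Bigl|\bigcup_i[\Theta_i-\arccos(D/R_i),\Theta_i+\arccos(D/R_i)]\Bigr|\ge2\pi .
\]
Each of these intervals is one of the intervals in the union defining $\mu_m(\tau)$, since $s_i\le t$ and $R(s_i)\ge mt$. Therefore $\mu_m(\tau)$, the measure of a larger union, is at least $2\pi$.

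The main point needing care is not an estimate. It is bookkeeping at the boundary cases. First, a source with $R_i=D$ exactly contributes a degenerate cap $\{\theta_i\}$; it still counts, and it is harmless since the caps are closed. Second, the current point $x(t)$ may be a source even when $t$ is not a vertex time, and it is included because $s\le t$ in the definition of $\mu_m$. Third, the phrase ``no assumption on radius or angle'' has to be justified by observing that the argument never compares $R$ or $\tilde\theta$ at different times except through the Kneser--Poulsen hypothesis. That hypothesis is exactly the expansion inequality, which holds for arbitrary bounded-variation lifts.
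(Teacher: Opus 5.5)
Your proposal is correct and is essentially the paper's own assembly. \cref{lem:vertices} reduces coverage to finitely many live sources; the signed variation function $\Theta$ supplies the expansion hypothesis $|\tilde\theta(s)-\tilde\theta(u)|\le|\Theta(s)-\Theta(u)|$; \cref{lem:KP} gives $2\pi$ for those sources; and enlarging the family to all live points with $s\le t$ only enlarges the union. Your observation that live sources have $R\ge mt>0$ (so $s_i\in[mt,t]$ and the lift is genuine there, away from the jumps of $\Theta$) is exactly how the paper uses \cref{lem:jumps} at this step.
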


A live source has $mt\le R(s)\le s\le t$, so all live sources lie in the compact time interval
$[mt,t]$ and $\mu_m(\tau)\le\Theta(t)-\Theta(mt)+\pi<\infty$. That finiteness --- the only role of the
restriction $R(s)\ge mt$ --- is what makes the boundary term of \cref{thm:book} a constant in
\cref{sec:proof}.

\begin{lemma}[Strictness]\label{lem:strict}
If a polygonal path is competitive at a level $m>m_*$, then for every $\tau$
\[
  \mu_{m_*}(\tau)\ \ge\ 2\pi+2\kappa\log(m/m_*),\qquad \kappa=m_*/\sqrt{1-m_*^2}.
\]
\end{lemma}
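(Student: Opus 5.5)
The plan is to compare the two levels source by source. \Cref{prop:mu} applied at level $m$ gives $\mu_m(\tau)\ge2\pi$. Every interval in $\mu_m(\tau)$ also appears, enlarged, in $\mu_{m_*}(\tau)$. An enlargement of a bounded union of intervals by $\varepsilon$ on both sides adds at least $2\varepsilon$ of measure. The only quantitative input needed is a uniform lower bound
\[
\varepsilon:=\kappa\log(m/m_*)
\]
on how much each half-width grows.

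\textbf{Step 1 (the set of sources only grows).} Fix $\tau$ and $t=e^\tau$. Every $s\le t$ with $R(s)\ge mt$ also has $R(s)\ge m_*t$. So the index set of the union defining $\mu_m(\tau)$ is contained in that of $\mu_{m_*}(\tau)$, and the centres $\Theta(s)$ are the same.

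\textbf{Step 2 (uniform widening).} For such a source put $x=mt/R(s)$ and $y=m_*t/R(s)=(m_*/m)x$. Since $R(s)\le s\le t$ we have $x\ge m$ and $y\ge m_*$. Substitute $\sigma=e^{-u}$ in $\arccos$, so that $\frac{d}{du}\arccos(e^{-u})=(e^{2u}-1)^{-1/2}$, which is decreasing in $u$. Then
\[
\arccos y-\arccos x=\int_{\log(1/x)}^{\log(1/x)+\log(m/m_*)}\frac{du}{\sqrt{e^{2u}-1}} .
\]
The upper limit is $\log(1/y)\le\log(1/m_*)$. Hence the integrand is at least $(m_*^{-2}-1)^{-1/2}=\kappa$, and the difference is at least $\varepsilon$. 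So each interval $[\Theta(s)\pm\arccos(mt/R(s))]$ is contained, together with its $\varepsilon$-neighbourhood, in $[\Theta(s)\pm\arccos(m_*t/R(s))]$.

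\textbf{Step 3 (measure of the enlargement).} Let $U$ be the union defining $\mu_m(\tau)$. By Steps 1 and 2, the union defining $\mu_{m_*}(\tau)$ contains the Minkowski sum $U+[-\varepsilon,\varepsilon]$. The set $U$ is bounded, because all sources lie in $[mt,t]$ and $\Theta$ is bounded there, as noted after \cref{prop:mu}. Put $a=\inf U$ and $b=\sup U$. Then $U+[-\varepsilon,\varepsilon]$ contains the disjoint union $U\cup[a-\varepsilon,a)\cup(b,b+\varepsilon]$, so its measure is at least $|U|+2\varepsilon\ge2\pi+2\kappa\log(m/m_*)$.

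The only point needing care is measurability of these uncountable unions of closed intervals. I expect to take this from the measure-theoretic properties of the lift in \cref{app:lift}, which are the same ones that make $\mu_m$ well defined. Alternatively, one can reduce to the finitely many live vertices plus the current point via \cref{lem:vertices}, using that a linear function on a segment is maximised at an endpoint, so intermediate points add nothing. The analytic core, Step 2, is elementary. The main thing to get right there is the direction of monotonicity, so that the bound $R\le t$ (hence $y\ge m_*$) is what produces the constant $\kappa$.
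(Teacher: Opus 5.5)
Your proof is correct and is essentially the paper's: the paper also gets the uniform half-width gain $\kappa\log(m/m_*)$ from $h'=\cot h\ge\kappa$ on $[0,\log(1/m_*)]$, which is your integral of $(e^{2u}-1)^{-1/2}$, and then uses that widening every interval by $\varepsilon$ pushes the two outermost ends outward, adding at least $2\varepsilon$. The only difference is that the paper widens the finite family of \cref{lem:KP} (your suggested alternative), which avoids the boundedness and measurability bookkeeping for the full union; that bookkeeping is harmless anyway, since $\Theta$ is continuous on the compact set of live times, so the union is compact.
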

\begin{proof}
Lowering the level from $m$ to $m_*$ replaces the half-width $h(s)$, $s=\log(R/(mt))\ge0$, of every
live source by $h(s+\lambda)$, $\lambda=\log(m/m_*)$, and $s+\lambda\le\log(1/m_*)$ because $R\le t$. On
$[0,\log(1/m_*)]$ we have $h'=\cot h\ge\cot\arccos m_*=\kappa$, so every half-width grows by at least
$\kappa\lambda$ (\lean{Shore.cap_halfwidth_gain}). Enlarging all radii of a finite nonempty family of
intervals by $\epsilon$ enlarges the union by at least $2\epsilon$, because the two outermost ends move
outwards (\lean{Shore.union_widen_of_le}). Apply this to the finite family of \cref{lem:KP}.
\end{proof}

\section{The bookkeeping inequality}\label{sec:book}

In logarithmic form, the source visited at time $s$ (we also write $s$ for its logarithm when no
confusion can arise) covers the lifted direction $\varphi$ at the query time $\tau$ if and only if
\[
  \tau+\log m\ \le\ z(s)+\log\cos(\varphi-\Theta(s)),\qquad |\varphi-\Theta(s)|<\tfrac\pi2,\qquad s\le\tau .
\]
The left side rises at unit speed: a source covers a direction from its visit until its \emph{score}
$z+\log\cos(\varphi-\Theta)$ is overtaken by the query level. Give the score the value $-\infty$
when $R(s)=0$ or $|\varphi-\Theta(s)|\ge\pi/2$. Fix $0<m<1$ and a window $[\tau_0,\tau_1]$.
If the path has stayed at the origin up to $e^{\tau_1}$, its covered measure is zero and there is
nothing to estimate; otherwise define, using the sources visited by $\tau_1$,
\begin{itemize}[leftmargin=2em,itemsep=1pt]
\item the \emph{final envelope} $E(\varphi)=\sup_{s\le\tau_1}\bigl[z(s)+\log\cos(\varphi-\Theta(s))\bigr]$,
and its domain $\Omega=\{\varphi:E(\varphi)>-\infty\}$;
\item for $\varphi\in\Omega$, the \emph{final source} $\sigma(\varphi)$: the maximiser with the largest angle, and among maximisers of
equal angle the one visited first;
\item the \emph{expiry} $\tau_e(\varphi)=E(\varphi)-\log m$, and the first time $\tau_f(\varphi)$ at which $\varphi$ is covered
\end{itemize}
($\tau_f=+\infty$ if $\varphi$ is never covered). The supremum is attained and both tie-breaking
choices are legitimate: this is \cref{lem:attained}, where the right-continuity of $\Theta$, and not
its continuity, is what does the work. Write $\tau_\sigma(\varphi)$ for the visit time of
$\sigma(\varphi)$, and $v_\sigma$, $a_\sigma=\Theta(\tau_\sigma)-\varphi$ for its data.
All of $E$, $\Omega$, $\sigma$, $\tau_\sigma$ and the front $\pfin$ below depend on the right end
$\tau_1$ of the window; $\RF$ does not. Everything is for one fixed window, and the bound of
\cref{thm:A} is uniform in it, which is what \cref{sec:proof} uses.

All direction integrals below are over $\Omega$; directions outside it contribute no covered time.
Put $L=\inf\Omega\in\R\cup\{-\infty\}$ and $U=\sup\Omega\in\R$. By \cref{lem:omega}, $\Omega$ is an
interval up to a countable set: $(L,U)\setminus\Omega$ is countable, and $U=\Theta(\tau_1)+\pi/2$
when $R(e^{\tau_1})>0$.

\begin{lemma}[Monotone final source]\label{lem:selector}
$\tau_\sigma\colon\Omega\to\R$ is nondecreasing.
\end{lemma}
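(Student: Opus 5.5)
The plan is a monotone comparative statics (Topkis-type) exchange argument. Write the score as $f(\varphi,s)=z(s)+g(\varphi-\Theta(s))$ with $g=\log\cos$ on $(-\pi/2,\pi/2)$ and $g=-\infty$ outside, and set $z=-\infty$ when $R=0$. The first goal is to show that the angle $\Theta(\tau_\sigma(\varphi))$ of the final source is nondecreasing in $\varphi$. The second is to convert this into monotonicity of the visit time, using that $\Theta$ is nondecreasing (\cref{lem:jumps}) together with the tie-breaking rule. Existence of the maximisers and legitimacy of the tie-breaking are taken from \cref{lem:attained}.

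\emph{Step 1: a supermodularity inequality.} Let $\varphi<\varphi'$ and let $s,s'$ be sources with $\Theta(s)<\Theta(s')$. I claim
\[
 f(\varphi',s')+f(\varphi,s)\ \ge\ f(\varphi',s)+f(\varphi,s').
\]
If the right side is $-\infty$ there is nothing to prove, so assume it is finite. Then $R(s),R(s')>0$, $|\varphi'-\Theta(s)|<\pi/2$ and $|\varphi-\Theta(s')|<\pi/2$. Chaining the inequalities
\[
 -\pi/2<\varphi-\Theta(s')<\varphi'-\Theta(s')<\varphi'-\Theta(s)<\pi/2,\qquad -\pi/2<\varphi-\Theta(s')<\varphi-\Theta(s)<\varphi'-\Theta(s)<\pi/2
\]
shows that all four scores are finite. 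The $z$ terms cancel, and the claim reduces to
\[
 g(\varphi'-\Theta(s'))-g(\varphi-\Theta(s'))\ \ge\ g(\varphi'-\Theta(s))-g(\varphi-\Theta(s)).
\]
This holds because $g'=-\tan$ is decreasing, i.e.\ $g$ is concave: the increment of $g$ over an interval of fixed length $\varphi'-\varphi$ is larger when the interval sits further to the left. The finite case is the only place where concavity of $\log\cos$ enters; the $-\infty$ bookkeeping handled above is the step I expect to need the most care.

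\emph{Step 2: angles of the final sources are monotone.} Let $\varphi<\varphi'$ in $\Omega$, with $s=\sigma(\varphi)$ and $s'=\sigma(\varphi')$, and suppose for contradiction that $\Theta(s')<\Theta(s)$. Apply Step 1 with the roles of the two sources interchanged:
\[
 f(\varphi',s)+f(\varphi,s')\ \ge\ f(\varphi',s')+f(\varphi,s).
\]
Here $f(\varphi',s')=E(\varphi')$ and $f(\varphi,s)=E(\varphi)$ are both finite, so the right side is finite. Each term on the left is at most the corresponding envelope value, $f(\varphi',s)\le E(\varphi')$ and $f(\varphi,s')\le E(\varphi)$. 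Hence both must be equalities, and in particular $f(\varphi',s)=E(\varphi')$, so $s$ is a maximiser at $\varphi'$. But $\Theta(s)>\Theta(s')$, which contradicts the choice of $\sigma(\varphi')$ as the maximiser of largest angle. Therefore $\Theta(s)\le\Theta(s')$.

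\emph{Step 3: from angles to visit times.} If $\Theta(s)<\Theta(s')$, then $s<s'$, because $\Theta$ is nondecreasing in time. If instead $\Theta(s)=\Theta(s')=\Theta^\ast$, then among sources $u$ with $\Theta(u)=\Theta^\ast$ the score is $z(u)+g(\cdot-\Theta^\ast)$. Differences of these scores are independent of $\varphi$, so for both $\varphi$ and $\varphi'$ the maximisers of angle $\Theta^\ast$ are exactly the sources of angle $\Theta^\ast$ that maximise $z$; this set is the same for both directions. The tie-breaking rule picks the first-visited element of this common set in both cases, so $s=s'$. In all cases $\tau_\sigma(\varphi)\le\tau_\sigma(\varphi')$, which proves \cref{lem:selector}.
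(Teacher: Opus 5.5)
Your proof is correct and is essentially the paper's argument. You use the exchange inequality from concavity of $\log\cos$ to show that crossed final sources would each be maximisers for both directions, contradicting the largest-angle choice. You then pass to visit times via monotonicity of $\Theta$ and the earliest-visit tie-break on the direction-independent set of $z$-maximisers of a fixed angle. Your explicit check that finiteness of the crossed pair of scores forces finiteness of the aligned pair fills in a $-\infty$ detail the paper leaves implicit.
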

\begin{proof}
The kernel $f=\log\cos$ is concave on $(-\pi/2,\pi/2)$. For $\varphi_1\le\varphi_2$ and
$\Theta_2\le\Theta_1$ the numbers $\varphi_1-\Theta_2$ and $\varphi_2-\Theta_1$ lie between
$\varphi_1-\Theta_1$ and $\varphi_2-\Theta_2$ and have the same sum, so
$f(\varphi_1-\Theta_1)+f(\varphi_2-\Theta_2)\le f(\varphi_1-\Theta_2)+f(\varphi_2-\Theta_1)$. Therefore, if the
best sources of $\varphi_1\le\varphi_2$ are crossed, each of them is a best source of both
directions, and the largest-angle best source has an angle which is monotone in $\varphi$
(\lean{Shore.finalSource_angle_mono}). This is the exchange argument of monotone comparative
statics \cite{topkis1978,milgromshannon1994}. Now for the visit times. If the angles of $\sigma(\varphi_1)$
and $\sigma(\varphi_2)$ differ, the smaller angle is visited earlier, because $\Theta$ is nondecreasing in
time. If they are equal, the maximisers with that angle are the times with that angle and the largest
$z$ among them, a set which does not depend on the direction; so the earliest one is the same.
\end{proof}

Define the \emph{finalisation front} and the \emph{first-coverage front}
\[
  \pfin(\tau)=\sup\bigl(\{L\}\cup\{\varphi\in\Omega:\tau_\sigma(\varphi)\le\tau\}\bigr),\qquad
  \RF(\tau)=\sup_{s\le\tau}\bigl[\Theta(s)+h(v(s)_+)\bigr].
\]
Both are nondecreasing, and $\pfin$ is finite at every finite $\tau\le\tau_1$, with
$\pfin(\tau)\le\Theta(\tau)+\pi/2$: when $L$ is finite it initializes the front even if nothing has
been finalised yet, and when $L=-\infty$ the directions of $\Omega$ below $\Theta(\tau)-\pi/2$ have
only past sources, so the set is nonempty. For the Stieltjes measure extend $\pfin$ constantly
after $\tau_1$. At the origin set $v=-\infty$, so $h(v_+)=0$.

For $s<t\le\tau_1$ the two sets $\{\varphi\in\Omega:\tau_\sigma\le s\}$ and
$\{\varphi\in\Omega:\tau_\sigma\le t\}$ are, by \cref{lem:selector}, initial intervals of $\Omega$ up to
their endpoints; since $\Omega$ has countable complement in $(L,U)$, their difference has length
$\pfin(t)-\pfin(s)$, that is
\begin{equation}\label{eq:pushforward}
 \bigl|\{\varphi\in\Omega:s<\tau_\sigma(\varphi)\le t\}\bigr|=\pfin(t)-\pfin(s),
\end{equation}
a finite difference even when $L=-\infty$. So the push-forward of Lebesgue measure on $\Omega$ under
$\tau_\sigma$ is the Stieltjes measure $\mathrm{d}\pfin$; continuity from above of that locally
finite measure also shows that $\pfin$ is right-continuous.

\begin{theorem}[Bookkeeping]\label{thm:book}
For every polygonal path, $0<m<1$, and every window with the definitions above,
\[
 \int_{\tau_0}^{\tau_1}\!\mu_m\dd\tau\ \le
 \int_{\{\varphi\in\Omega:\tau_0<\tau_\sigma(\varphi)\le\tau_1\}}\!\bigl[v_\sigma+\log\cos a_\sigma\bigr]_+\dd\varphi
 \;+\;\int_{\tau_0}^{\tau_1}\!\bigl(\RF-\pfin\bigr)_+\dd\tau\;+\;\log\tfrac1m\;\mu_m(\tau_0).
\]
\end{theorem}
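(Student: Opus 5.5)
The plan is to apply Fubini to the left side, write $\int_{\tau_0}^{\tau_1}\mu_m\dd\tau=\int_\Omega|\{\tau\in[\tau_0,\tau_1]:\varphi\text{ covered at }\tau\}|\dd\varphi$, and bound the time slice of each direction $\varphi$ pointwise. Three facts drive this.

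First, a direction covered at some $\tau\le\tau_1$ is covered by a source visited by $\tau_1$. Hence $\tau+\log m\le E(\varphi)$, that is $\tau\le\tau_e(\varphi)$. Also $\tau\ge\tau_f(\varphi)$ by definition. So the slice has length at most $(\tau_e-M)_+$ with $M=\max(\tau_f,\tau_0)$.

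Second, the final source $\sigma(\varphi)$ itself covers $\varphi$ at every $\tau\in[\tau_\sigma,\tau_e]$.

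Third, $z(s)\le s$ in logarithmic time, because $R\le t$.

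Measurability of the slice set and of $\tau_f,\tau_\sigma$ must be checked. I expect this to follow from the finite-vertex structure of \cref{lem:vertices}, the attainment in \cref{lem:attained}, and the monotonicity of \cref{lem:selector}. This is where I expect most of the technical friction, rather than in the inequalities.

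\emph{Directions finalised before the window} ($\tau_\sigma\le\tau_0$). Suppose such a $\varphi$ is covered at some $\tau\in[\tau_0,\tau_1]$. Then $\tau_0\le\tau\le\tau_e$. Since $\sigma(\varphi)$ is visited by $\tau_0$, it already covers $\varphi$ at $\tau_0$. So these directions lie in the set counted by $\mu_m(\tau_0)$. Their slice length is at most
\[
\tau_e-\tau_0=z_\sigma+\log\cos a_\sigma-\log m-\tau_0\le\tau_\sigma-\tau_0+\log\tfrac1m\le\log\tfrac1m .
\]
Integrating gives the boundary term $\log\frac1m\,\mu_m(\tau_0)$. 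Directions with $\tau_\sigma>\tau_1$ do not exist in $\Omega$.

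\emph{Directions finalised inside the window} ($\tau_0<\tau_\sigma\le\tau_1$). I use the elementary split
\[
(\tau_e-M)_+\le(\tau_e-\tau_\sigma)_++(\tau_\sigma-M)_+ .
\]
Since $v_\sigma=z_\sigma-\tau_\sigma-\log m$, we have $\tau_e-\tau_\sigma=v_\sigma+\log\cos a_\sigma$. So the first part integrates exactly to the fan term.

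For the second part, write $(\tau_\sigma-M)_+=\int_{\tau_0}^{\tau_1}\mathbf 1\{\tau_f(\varphi)\le\tau<\tau_\sigma(\varphi)\}\dd\tau$ and exchange the integrals again. For fixed $\tau$ I then bound the measure of $\{\varphi:\tau_f\le\tau<\tau_\sigma\}$ by $(\RF(\tau)-\pfin(\tau))_+$. Two inclusions give this.

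\emph{Upper inclusion.} If $\varphi$ was first covered at $\tau'\le\tau$ by a source $s\le\tau'$, then the half-width at query $\tau'$ is $\arccos(me^{\tau'}/R(s))\le h(v(s)_+)$. Hence $\varphi\le\Theta(s)+h(v(s)_+)\le\RF(\tau)$.

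\emph{Lower inclusion.} If $\tau_\sigma(\varphi)>\tau$, then by \cref{lem:selector} and the definition of $\pfin$, $\varphi\ge\pfin(\tau)$, up to the endpoint and countable-complement issues already handled in \eqref{eq:pushforward}.

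So the set lies in $[\pfin(\tau),\RF(\tau)]$, and its measure is at most $(\RF-\pfin)_+(\tau)$. Jumps of $\Theta$ are harmless because, by \cref{lem:jumps}, they occur only at $R=0$, where the point is not a source.

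Summing the two classes of directions gives the stated inequality. The main obstacle is the careful justification of the two Fubini exchanges, in particular that $\{(\varphi,\tau):\varphi\text{ covered at }\tau\}$ is measurable and that $\tau_f,\tau_\sigma$ are measurable on $\Omega$. I would handle this by noting that $\tau_\sigma$ is monotone, and that the covered set at each $\tau$ is a finite union of intervals whose endpoints vary continuously (piecewise) in $\tau$.
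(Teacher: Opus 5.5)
Your proposal is correct and follows the paper's proof essentially step for step. Both use Tonelli, then split each direction's covered time at $\tau_\sigma$ into the fan term and the pre-finalisation part, then place the $\tau$-section in $[\pfin(\tau),\RF(\tau)]$ by the same two inclusions, and bound the directions with $\tau_\sigma\le\tau_0$ by the same boundary-term argument. The differences are minor. Your inequality $(x+y)_+\le x_++y_+$ makes the paper's separate remark about a final source that does not cover $\varphi$ at its visit unnecessary. The paper obtains measurability of the covered set from right-continuity and a countable supremum (\cref{lem:measurable}), not from the finite-union structure you sketch.
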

\begin{proof}
The covered set $\{(\varphi,\tau):\varphi\text{ is covered at }\tau\}$ is
$\{E_\tau(\varphi)\ge\tau+\log m\}$ with $E_\tau(\varphi)=\sup_{s\le\tau}[z(s)+\log\cos(\varphi-\Theta(s))]$;
it is measurable (\cref{lem:measurable}). By Tonelli's theorem
$\int\mu\dd\tau=\int_\Omega T(\varphi)\dd\varphi$, where $T(\varphi)$ is the measure of the
times in the window at which $\varphi$ is covered. A covered time satisfies $\tau+\log m\le E(\varphi)$,
so the covered times lie in $[\max(\tau_f,\tau_0),\min(\tau_e,\tau_1)]$.

\emph{Case $\tau_\sigma>\tau_0$.} For any intermediate time (\lean{Shore.covered_time_le}),
\[
  T\le(\tau_e-\tau_\sigma)_++\bigl(\tau_\sigma-\max(\tau_f,\tau_0)\bigr)_+ ;
\]
this
also holds if the final source does not cover $\varphi$ at its visit, for then the first term vanishes
and $T\le\tau_e-\tau_f<\tau_\sigma-\tau_f$. The first term is
$\tau_e-\tau_\sigma=z_\sigma+\log\cos a_\sigma-\log m-\tau_\sigma=v_\sigma+\log\cos a_\sigma$. By
Tonelli again, the second term integrates to the measure of
$\{(\varphi,\tau):\max(\tau_f,\tau_0)\le\tau<\tau_\sigma(\varphi)\}$. For fixed $\tau$ its section lies in
$[\pfin(\tau),\RF(\tau)]$: if $\tau_\sigma(\varphi)>\tau$, then $\tau_\sigma(\varphi')>\tau$ for all
$\varphi'>\varphi$ by \cref{lem:selector}, so $\pfin(\tau)\le\varphi$; and a direction covered at a time
$\tau'\le\tau$ by a source $s\le\tau'$ has
$|\varphi-\Theta(s)|\le h(z(s)-\tau'-\log m)\le h(v(s)_+)$, so $\varphi\le\RF(\tau)$.

\emph{Case $\tau_\sigma\le\tau_0$ and $T>0$.} Then $\tau_e\ge\tau_0$, so the final source, which was visited
by $\tau_0$, covers $\varphi$ at $\tau_0$; and
$T\le\tau_e-\tau_\sigma=v_\sigma+\log\cos a_\sigma\le\log(1/m)$ because $R\le t$. Such directions have
measure at most $\mu_m(\tau_0)$. In particular, directions finalised exactly at the initial
endpoint belong to this boundary term, not to the fan integral.
\end{proof}

Lapses of coverage are counted as covered and the front $\RF$ ignores expiry; both only weaken
the bound. On the spiral the inequality is an equality up to the boundary term: it is the contact
equation \eqref{eq:contact}.

\section{The three-state relaxed problem}\label{sec:traj}

Let the \emph{state} of a polygonal path at a time with $v\ge0$ be
\[
  (v,a,c),\qquad a=\Theta-\pfin,\qquad c=\RF-\Theta,\qquad q=a+c=\RF-\pfin .
\]
The front coordinates $a,c$ are also used during dives. Use right-continuous representatives and
write $\mathrm{d}\pfin^{\,c}$ for the continuous part of the Stieltjes measure, including any
singular continuous part. Define $\fan(v,a)=[v+\log\cos a]_+$ for $|a|<\pi/2$, and zero when
$|a|=\pi/2$ or $v=-\infty$. The \emph{fan reward measure} is
\[
 \mathcal F(B)=\int_{\{\varphi\in\Omega:\tau_\sigma(\varphi)\in B\}}
       \fan(v_\sigma,a_\sigma)\dd\varphi .
\]
By \eqref{eq:pushforward}, its continuous part is $\fan(v(\tau),a(\tau))\,\mathrm{d}\pfin^{\,c}(\tau)$.
At an atom $u$ of $\mathrm{d}\pfin$ the directions finalised at $u$ fill $(\pfin(u-),\pfin(u))$ up to
endpoints and a null set --- and $\Theta$ is continuous at such a $u$, because a zero-radius point is
never a final source and all jumps of $\Theta$ occur at zero radius (\cref{lem:jumps}) --- so
\begin{equation}\label{eq:fan-atom}
 \mathcal F(\{u\})=\int_{a(u)}^{a(u-)}\fan(v(u),r)\dd r.
\end{equation}
The shorthand $\fan\,\mathrm{d}\pfin$ below always means $\mathcal F$: at a jump the \emph{whole} fan
is integrated, not an endpoint value times the jump size.

\begin{remark*}[The ray]
For $x(t)=(t,0)$ take $\Theta\equiv0$. Then $\Omega=(-\pi/2,\pi/2)$, every final source is the
endpoint $e^{\tau_1}$, and $\pfin$ is $-\pi/2$ before $\tau_1$ and $\pi/2$ at $\tau_1$; so
$a=\Theta-\pfin$ stays in $[-\pi/2,\pi/2]$, as \cref{lem:state} asserts. Both endpoint fan values at
the jump are zero while the integral in \eqref{eq:fan-atom} is positive: the distinction above is
necessary already here.
\end{remark*}

With these conventions \cref{thm:book} reads
\begin{equation}\label{eq:reward}
 \begin{split}
  \int_{\tau_0}^{\tau_1}\!(\mu_m-2\pi)\dd\tau
    &\le\int_{(\tau_0,\tau_1]}\!\text{reward}+\log\tfrac1m\,\mu_m(\tau_0),\\
  \text{reward}&=\mathcal F+\bigl(q_+-2\pi\bigr)\dd\tau.
 \end{split}
\end{equation}

\begin{lemma}[State space]\label{lem:state}
$|a|\le\pi/2$ and $h(v_+)\le c\le\arccos m$.
\end{lemma}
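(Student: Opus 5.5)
The plan is to prove the two assertions separately, using only the monotonicity of $\Theta$ (\cref{lem:jumps}), the definitions of $\RF$ and $\pfin$, the bound $R\le t$, and the structure of $\Omega$ (\cref{lem:omega}). Throughout, $\tau\le\tau_1$ is a time in the window with $v(\tau)\ge0$, so $R(e^\tau)>0$.

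\emph{The bounds on $c$.} For the lower bound I take $s=\tau$ in the supremum defining $\RF(\tau)$. This gives $\RF(\tau)\ge\Theta(\tau)+h(v(\tau)_+)$, that is $c\ge h(v_+)$.

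For the upper bound I use that $\Theta$ is nondecreasing. Every term of the supremum then satisfies $\Theta(s)+h(v(s)_+)\le\Theta(\tau)+h(v(s)_+)$. Since $R(s)\le s$ we have $v(s)\le\log(1/m)$, and at the origin $h(v_+)=0$. Because $h$ is increasing, $h(v(s)_+)\le h(\log(1/m))=\arccos m$. Hence $c\le\arccos m$. The upper bound does not need $v(\tau)\ge0$, so it also holds during dives.

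\emph{The bounds on $a$.} The upper bound on $\pfin$, namely $\pfin(\tau)\le\Theta(\tau)+\pi/2$, was already noted after its definition; it gives $a\ge-\pi/2$. It remains to show $\pfin(\tau)\ge\Theta(\tau)-\pi/2$.

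\begin{itemize}[leftmargin=2em,itemsep=1pt]
\item \emph{Directions below $\Theta(\tau)-\pi/2$ are finalised by $\tau$.} Let $\varphi\in\Omega$ with $\varphi<\Theta(\tau)-\pi/2$. Its final source $s=\sigma(\varphi)$ has a finite score, so $|\varphi-\Theta(s)|<\pi/2$. Then $\Theta(s)<\varphi+\pi/2<\Theta(\tau)$. Because $\Theta$ is nondecreasing, this forces $s<\tau$. Hence $\tau_\sigma(\varphi)\le\tau$ and $\pfin(\tau)\ge\varphi$.
\item \emph{Case $L\ge\Theta(\tau)-\pi/2$.} Then $\pfin(\tau)\ge L$ by definition, and the claim follows.
\item \emph{Case $L<\Theta(\tau)-\pi/2$.} The current point has $R>0$, so its own direction $\Theta(\tau)$ has finite score and lies in $\Omega$; thus $U\ge\Theta(\tau)$. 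The interval $(L,\Theta(\tau)-\pi/2)$ therefore lies inside $(L,U)$. By \cref{lem:omega}, $\Omega$ misses only countably many points of $(L,U)$, so points of $\Omega$ accumulate at $\Theta(\tau)-\pi/2$ from below. Taking the supremum over them in the first item gives $\pfin(\tau)\ge\Theta(\tau)-\pi/2$.
\end{itemize}

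The only delicate step is this density argument, together with the tie-breaking in $\sigma$. The argument uses only that \emph{some} maximiser lies at a time before $\tau$, and every maximiser does, so the chosen final source is irrelevant there. I do not expect a real obstacle.
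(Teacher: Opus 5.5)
Where your argument applies, it is the paper's own. For $c$ you use the term $s=\tau$ and $R\le t$. For $a$ you use that every source of a direction below $\Theta(\tau)-\pi/2$ lies strictly in the past, and then either the initialization at $L$ or the density of $\Omega$ from \cref{lem:omega}.

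The gap is in the scope. You fix a time with $v(\tau)\ge0$, and your third case uses $R(\tau)>0$ to put $\Theta(\tau)$ in $\Omega$, hence $\Theta(\tau)-\pi/2$ inside $(L,U)$. But the bound $\pfin\ge\Theta-\pi/2$, that is $a\le\pi/2$, is needed along dives as well. The dive step of \cref{thm:A} uses $\pfin\ge\max(\pfin(\text{start}),\Theta-\frac\pi2)$ to obtain $q\le q^{\rm nf}$ and hence the bound on the time reward. The paper names the zero-radius times, which carry the jumps of $\Theta$, as precisely the delicate point of this lemma. At such a time there is no source and $\Theta(\tau)$ is the post-jump angle. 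Nothing in your argument then rules out $U<\Theta(\tau)-\pi/2$; in that case $\pfin(\tau)\le U$ and $a>\pi/2$.

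The missing ingredient is \cref{lem:jumps}. Your first bullet never used $R(\tau)>0$, so only the density step needs repair:
\begin{itemize}
\item The jump of $\Theta$ at an origin passage is at most $\pi$. The same holds at a departure after a wait, during which the incoming angle is kept. Hence $\Theta(\tau)-\pi/2\le\Theta(\tau-)+\pi/2\le U$. The last inequality holds because points with $R>0$ just before $\tau$ contribute the open intervals $(\Theta(s)-\pi/2,\Theta(s)+\pi/2)$ to $\Omega$.
\item With this bound your density argument goes through unchanged. Equality, with $a=\pi/2$ exactly, occurs for a departure at the antipodal angle.
\item Before the first departure, $L=\Theta(\tau)-\pi/2$, so your initialization case applies.
\end{itemize}
It is also worth recording, as the paper does, that no direction is finalised at a zero-radius time. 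So $\pfin(\tau)=\pfin(\tau-)$, and the bound with the post-jump angle is really a statement about directions finalised strictly earlier.

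At dive times with $R(\tau)>0$ your argument works verbatim, and your bounds on $c$ already hold at all times.
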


The proof is in \cref{app:reg}; the delicate point is that $\pfin(\tau)\ge\Theta(\tau)-\pi/2$ holds
also at the zero-radius times which carry the jumps of $\Theta$.

If the ship has speed $\rho\in[0,1]$ and heading $b\in[0,\pi]$ relative to the outward radial
direction (the sign of the angular motion has been unfolded), then
\begin{equation}\label{eq:dyn}
 v'=\rho P\cos b-1,\qquad \Theta'=\rho P\sin b,\qquad a'=\Theta'-\pfin',
\end{equation}
where $'=\mathrm{d}/\mathrm{d}\tau$. A \emph{dive} is a maximal interval on which $v<0$.

\begin{lemma}[Regularity and the Skorokhod step]\label{lem:skorokhod}
On a compact window:
\begin{enumerate}[leftmargin=2em,itemsep=1pt]
\item $\Theta$ is nondecreasing and piecewise analytic, with finitely many upward jumps, all at
passages through the origin and hence strictly inside dives; outside dives $v$ and $\Theta$ are
Lipschitz; there are finitely many dives.
\item $g=\Theta+h(v_+)$ is absolutely continuous between the jumps, and so is its running maximum
$\RF$.
\item For almost every $\tau$ outside dives,
\begin{equation}\label{eq:S}
 \text{either}\quad c>h(v)\ \text{and}\ c'=-\Theta',\qquad\text{or}\qquad c=h(v)\ \text{and}\ c'=\bigl(h(v)\bigr)'.
\end{equation}
\item $a$ has bounded variation; outside dives it jumps only downwards.
\end{enumerate}
\end{lemma}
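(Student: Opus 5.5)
The plan is to prove the four items in order, each resting on the polygonal structure from \cref{lem:polygonal} and the lift construction of \cref{app:lift}.

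\emph{Item 1.} On a compact window $[\tau_0,\tau_1]$ only finitely many segments of the path occur, because the vertex times $t_j=(1+\eta)^j$ are locally finite in $(0,\infty)$.

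On a segment not through the origin, the lift $\tilde\theta$ is analytic in $t$. Indeed, $\tilde\theta$ is the argument of an affine map, and its derivative $\det(x,x')/|x|^2$ has constant sign there. So $\Theta$ equals $\pm\tilde\theta$ plus a constant and is analytic. A segment meets the origin at most once, or rests there on an interval. By \cref{lem:jumps} the jumps of $\Theta$ occur only at such times, so there are finitely many.

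At such a time $R=0<mt$, so $v=-\infty$. By continuity of $R$ this point has a neighbourhood with $v<0$, so it lies strictly inside a dive. Outside dives we have $R\ge mt\ge me^{\tau_0}>0$. There $z=\log R$ is Lipschitz in $\tau$, with $|z'|\le t/R\le 1/m$. The same bound $P\le 1/m$ makes $\Theta'=\rho P\sin b$ bounded, by \eqref{eq:dyn}.

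Finiteness of dives comes from analyticity. On each segment, $R(t)^2-m^2t^2$ is a quadratic polynomial in $t$. So it either vanishes identically or changes sign at most twice. Each dive is a sign interval of this function, so finitely many segments give finitely many dives.

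\emph{Item 2.} The function $h(v_+)$ is continuous, with $h(0)=0$. It is locally Lipschitz where $v>0$, but $h$ is only H\"older-$\tfrac12$ at $0$, since $h(s)\sim\sqrt{2s}$. So $h(v_+)$ composed with the Lipschitz $v$ is not automatically absolutely continuous. I will argue instead from piecewise analyticity: on each segment, $v$ is analytic with finitely many zeros. Between consecutive zeros, $h(v_+)$ is monotone near each zero, since $v$ is monotone there, and smooth elsewhere. A continuous function that is piecewise $C^1$ and monotone near the finitely many exceptional points is absolutely continuous. Hence $g=\Theta+h(v_+)$ is absolutely continuous between the jumps of $\Theta$. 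The running maximum of an absolutely continuous function is absolutely continuous. At a jump of $\Theta$ we have $h(v_+)=0$ and $\RF\ge g(\text{jump}-)$, so the maximum jumps by at most the jump of $g$. I expect this to give absolute continuity of $\RF$ on each interval between jumps.

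\emph{Item 3.} Outside dives, $c=\RF-\Theta\ge h(v)$. At almost every point I split into two cases.
\begin{itemize}
\item Where $\RF>g$: by continuity, $\RF$ is locally constant, so $\RF'=0$ and $c'=-\Theta'$.
\item Where $\RF=g$: this is the Skorokhod step. Both $\RF$ and $g$ are differentiable almost everywhere. At almost every point of the set $\{\RF=g\}$ which is also a density point, the derivatives agree, since $\RF-g\ge0$ attains its minimum $0$ there. Hence $c'=g'-\Theta'=(h(v))'$.
\end{itemize}
This is the standard Lebesgue-point argument for running maxima.

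\emph{Item 4.} The function $a=\Theta-\pfin$ is a difference of two monotone functions, both finite on the window. This holds for $\pfin$ by its finiteness shown earlier; I extend it constantly after $\tau_1$. So $a$ has bounded variation. Outside dives $\Theta$ is continuous, so any jump of $a$ is minus a jump of $\pfin$. Since $\pfin$ is nondecreasing, these jumps are downward.

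The main obstacle is item 2, specifically absolute continuity of $h(v_+)$ at the zeros of $v$. It is also where dive endpoints at the window boundary need care. If the monotonicity-near-zeros argument fails, for example if $v$ touches $0$ tangentially, I would fall back on the bound $h(v_+)\le\sqrt{2v_+}$ together with the fact that $v$ is analytic with finite order zeros. Then $h(v_+)$ is a Puiseux-type function, which is still absolutely continuous.
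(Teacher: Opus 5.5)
Your proposal is correct and follows the paper's proof item by item: finitely many edges in the window, each with $R^2-m^2t^2$ quadratic; absolute continuity of $h(v_+)$ from the monotonicity of $v$ on the finitely many pieces around its zeros; local constancy of $\RF$ off the contact set; and $a=\Theta-\pfin$ with $\Theta$ continuous outside dives. In item~3 your argument is slightly simpler than the paper's accumulation-point-plus-countability step, since Fermat's theorem applied to $\RF-g\ge0$ at a zero needs no density points, and your fallback for tangential zeros is unnecessary because $v$ is still monotone on each side of such a zero.
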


Again the proof is in \cref{app:reg}. Statement \eqref{eq:S} is the regulator property of the
Skorokhod reflection problem \cite{skorokhod1961,lionssznitman1984}: $c-h(v_+)\ge0$ is the reflected
path, and the regulator $\RF$ increases only on the contact set: no maximum has to be
differentiated, and no sign condition on a derivative of $V$ will be needed. Part~(4) is what makes
hypothesis (A1) below usable, since $V_a\ge\fan$ prices a \emph{decrease} of $a$; an upward jump of
$a$ outside a dive would be fatal.

\section{The verification theorem}\label{sec:verif}

Let $D_3$ be the state space of \cref{lem:state} at the level $m_*$,
\[
  D_3=\{0\le v\le\log\tfrac1{m_*},\ |a|\le\tfrac\pi2,\ h(v)\le c\le\arccos m_*\} ,
\]
a compact set; the constraint $c\ge h(v)$ reads $v\le-\log\cos c$, so the bound $v\le\log\frac1{m_*}$
is implied by $c\le\arccos m_*$. Below, ``$V$ is $C^1$ on $D_3$'' means that $V$ is the restriction of
a $C^1$ function on an open neighbourhood of $D_3$; for the $V$ of \cref{sec:cert} such an extension
is written down in \cref{app:cert}, so no extension theorem is needed. For $0<D\le\pi$ put
\[
  \Delta\tau_{\min}(D)=2\operatorname{arsinh}\bigl(\kappa\sin\tfrac D2\bigr),
\]
which \cref{lem:divetime} identifies as a lower bound for the duration of a dive whose unfolded
angular advance is $D$.

The two dissipation hypotheses have the same shape. With $P=e^{-v}/m_*$ and $q=a+c$ set
\begin{equation}\label{eq:Phi}
  \Phi(X,Y)\ =\ P\sqrt{X^2+Y_+^2}\ -\ X\ -\ 2\pi\ +\ q_+ ,\qquad X,Y\in\R .
\end{equation}
For $b\in[0,\pi]$ one has $X\cos b+Y\sin b\le\sqrt{X^2+Y_+^2}$, with equality for some heading
(\lean{Shore.heading_sup_le}, \lean{heading_sup_attained}); so $\Phi\le0$ says exactly that no
heading and no speed makes the corresponding dissipation rate positive.

\begin{theorem}[Verification]\label{thm:A}
Let $V$ be $C^1$ on $D_3$ and satisfy
\begin{itemize}[leftmargin=3.2em,itemsep=2pt]
\item[\rm(A1)] $V_a\ge\fan(v,a)$ on $D_3$;
\item[\rm(A2)] $\Phi\bigl(V_v,\ V_a-V_c\bigr)\le0$ on $D_3$;
\item[\rm(A3)] $\Phi\bigl(V_v+V_c\,h'(v),\ V_a\bigr)\le0$ on the surface $c=h(v)$, and in the limit
$v\to0$;
\item[\rm(A4)] $V\bigl(0,\min(a+D,\tfrac\pi2),\max(c-D,0)\bigr)-V(0,a,c)\le\bigl(2\pi-Q_+\bigr)\,\Delta\tau_{\min}(D)$
for all $(a,c)$ and $0<D\le\pi$, where $Q=\max(q_0,q_1)$ with $q_0=a+c$ and $q_1$ the same sum at
the arrival state.
\end{itemize}
Then for every polygonal path and every window whose ends have $v\ge0$,
\[
  \int_{\tau_0}^{\tau_1}\bigl(\mu_{m_*}-2\pi\bigr)\dd\tau\ \le\ \osc V+\log\tfrac1{m_*}\,\mu_{m_*}(\tau_0).
\]
\end{theorem}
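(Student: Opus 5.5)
The plan is to show that $W(\tau)=V(v,a,c)+\int_{(\tau_0,\tau]}\text{reward}$ is nonincreasing along every polygonal path. Combined with \eqref{eq:reward} at level $m_*$ this gives
\[
 \int_{\tau_0}^{\tau_1}(\mu_{m_*}-2\pi)\dd\tau\le W(\tau_1)-V(\text{state at }\tau_0)+\log\tfrac1{m_*}\mu_{m_*}(\tau_0),
\]
and then $W(\tau_1)-V(\tau_0)\le V(\tau_1)-V(\tau_0)\le\osc V$. By \cref{lem:skorokhod}(1) the window splits into finitely many intervals. On the intervals outside dives $v\ge0$ and the state lives in $D_3$ (\cref{lem:state}); the remaining intervals are the finitely many dives. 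I would handle the two kinds of interval separately and then concatenate. The window ends have $v\ge0$, so no dive is cut by them.

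\emph{Outside dives.} Here $v,\Theta$ are Lipschitz and $c$ is absolutely continuous (\cref{lem:skorokhod}(2)). The front $\pfin$ is nondecreasing with only downward jumps of $a$ (part (4)). Decompose $\mathrm d\pfin$ into its absolutely continuous, singular continuous and atomic parts, and write $V(\tau_1)-V(\tau_0)$ through a chain rule for $V\circ$(BV curve) on $D_3$ ($V$ is $C^1$ on a neighbourhood, so the Vol'pert/Stieltjes chain rule applies). The continuous part of $-\mathrm d\pfin$ enters through $V_a$, and (A1) gives $-V_a\,\mathrm d\pfin^{\,c}+\fan\,\mathrm d\pfin^{\,c}\le0$. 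At an atom, $a$ drops from $a(u-)$ to $a(u)$ with $v,c$ fixed. Then $V(\cdot,a(u),\cdot)-V(\cdot,a(u-),\cdot)=-\int_{a(u)}^{a(u-)}V_a$, and this is cancelled by \eqref{eq:fan-atom} through (A1) integrated in $r$; this is exactly why the whole fan is integrated. For the absolutely continuous motion, use \eqref{eq:dyn} and the dichotomy \eqref{eq:S}.
\begin{itemize}
\item Where $c>h(v)$ we have $c'=-\Theta'$. The rate is $V_v(\rho P\cos b-1)+(V_a-V_c)\rho P\sin b+q_+-2\pi$, and (A2) with the heading bound $X\cos b+Y\sin b\le\sqrt{X^2+Y_+^2}$ makes it $\le0$; the case $\rho<1$ interpolates with $\rho=0$, which is $-V_v+q_+-2\pi\le0$, also implied by (A2) since $\Phi$ is convex in $P$-scaling.
\item Where $c=h(v)$ we have $c'=h'(v)v'$, and (A3) handles it in the same way. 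Points with $v=0$ and $c=h(0)=0$ form a null set or are covered by the limiting clause of (A3).
\end{itemize}

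\emph{Dives.} A dive starts and ends at $v=0$, and during it no source is live beyond $\RF$ bookkeeping. I would bound the reward over the dive by $(Q_+-2\pi)\,|{\rm dive}|$ and show the state map. $\RF$ cannot increase, since $h(v_+)=0$ and $\Theta$ stays below the running max once $c\ge0$ is used up, so $c$ decreases by the angular advance $D$, floored at $0$. Also $a$ increases by at most $D$, capped at $\pi/2$, since $\pfin$ is nondecreasing. Finally the fan reward inside is zero, because sources with $v<0$ never finalise with positive fan. Then $q$ is bounded by $Q$ pointwise using monotonicity of $\pfin$ and $\RF$, and the duration is at least $\Delta\tau_{\min}(D)$ by \cref{lem:divetime}. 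Since the integrand $q_+-2\pi\le Q_+-2\pi<0$ whenever (A4) is informative, a longer dive only helps. Monotonicity of $V$ in the right directions (from (A1) and (A2), e.g. $V_a\ge0$, $V_c\le V_a$) lets me pass from the true arrival state to $(0,\min(a+D,\pi/2),\max(c-D,0))$, and (A4) closes the step. A jump of $\pfin$ exactly at a dive end belongs to the next outside interval.

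\emph{Main obstacle.} I expect the hardest part to be the dive step. The arrival state need not equal the extremal state in (A4), so a comparison is needed, and $Q_+<2\pi$ is not guaranteed in general, in which case the duration bound goes the wrong way. I would first try to prove that during a dive $q$ is nonincreasing, so that $Q=q_0$ dominates. If $Q_+\ge2\pi$ I would instead use an upper bound on the dive duration from the arrival time. The measure-theoretic chain rule at atoms coinciding with kinks of $c$ is the secondary technical worry.
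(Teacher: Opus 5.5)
Your architecture matches the paper's. You telescope $V$ plus the reward. Outside dives you use the BV chain rule, with (A1) absorbing $\mathrm{d}\pfin^{\,c}$ and the whole fan at each atom, and with (A2) and (A3) handling the absolutely continuous motion through \eqref{eq:S} and the heading supremum. Each dive is closed with $V_a\ge0$ and (A4). The outside-dive part is sound, but the dive step does not close as written.

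First, $\RF$ is not frozen during a dive. There $g=\Theta$, so $\RF=\max(\RF_0,\Theta)$, and it rises with $\Theta$ once $c$ has reached $0$; that is what produces your floor $c_1=\max(c_0-D,0)$. Hence $q$ can increase: with $c_0=0$, $a_0<\pi/2$ and nothing finalised, $q=a$ grows one-for-one with $\Theta$. So the route you would try first ($q$ nonincreasing) is false. Monotonicity of $\pfin$ and $\RF$ alone cannot bound a difference of two nondecreasing functions by its end values.

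The missing idea is a comparison with the no-finalisation profile. Write $\Theta_0,\RF_0,p_0$ for $\Theta,\RF,\pfin$ at the start of the dive. $\RF$ does not depend on finalisation, and $\pfin\ge\max(p_0,\Theta-\frac\pi2)$ by \cref{lem:state}. So at advance $u$
\[
 q\ \le\ q^{\rm nf}(u)=\max(\RF_0,\Theta_0+u)-\max\bigl(p_0,\Theta_0+u-\tfrac\pi2\bigr).
\]
The slope of $q^{\rm nf}$ in $u$ is $0$, then $+1$ or $-1$, then $0$. Being monotone, it is bounded by its end values $q^{\rm nf}(0)=q_0$ and $q^{\rm nf}(D)=\min(a_0+D,\frac\pi2)+\max(c_0-D,0)=q_1$. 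This is exactly why (A4) carries $Q=\max(q_0,q_1)$.

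Second, the case $Q_+\ge2\pi$ that worries you never occurs. By \cref{lem:state} both states have $a\le\pi/2$ and $c\le\arccos m_*$, so $Q\le\frac\pi2+\arccos m_*<2\pi$. The dive's reward is therefore at most $-(2\pi-Q_+)$ times its duration. Your fallback could not have worked anyway: a dive's duration has no upper bound, since the ship may wait inside the disk.

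Third, (A4) is assumed only for $0<D\le\pi$, so dives with $D>\pi$ are not covered by your argument. You need the second clause of \cref{lem:divetime}: such a dive lasts at least $\Delta\tau_{\min}(\pi)$, and its arrival state is the one for $D=\pi$, because $a_0\ge-\frac\pi2$ and $c_0<\pi$. The case $D=0$ needs only (A1).

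Two smaller points.
\begin{itemize}
\item Your telescoping display is off. The correct chain is $\int_{(\tau_0,\tau_1]}\text{reward}=W(\tau_1)-V(\tau_1)\le W(\tau_0)-V(\tau_1)=V(\tau_0)-V(\tau_1)\le\osc V$. Your expression $W(\tau_1)-V(\tau_0)$ and the step $W(\tau_1)\le V(\tau_1)$ are not right, although the conclusion is.
\item Since $c_1$ is exact, comparing arrival states needs only $V_a\ge0$. The inequality $V_c\le V_a$ is not needed, and it does not follow from (A2), where only $(V_a-V_c)_+$ enters.
\end{itemize}
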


\noindent Note that (A1) forces $V_a\ge0$: this makes the positive part in $\Phi$ inactive in
(A3), and it is used again in the dive step of the proof.

\begin{proof}
Put $W(\tau)=V(v,a,c)$, with the right-continuous state, at the times with $v\ge0$. We show that
$\mathrm{d}W+\fan\dd\pfin+(q_+-2\pi)\dd\tau\le0$ as measures outside dives, and the corresponding
inequality across each dive. Integrating on $(\tau_0,\tau_1]$ telescopes to
$W(\tau_1)-W(\tau_0)$: the jump at $\tau_0$ is excluded, and the one at $\tau_1$ is included.
With \cref{thm:book} and \eqref{eq:reward} this gives the claim, since
$W(\tau_0)-W(\tau_1)\le\osc V$.

\emph{Outside dives.} Outside dives $v$ and $\Theta$ are Lipschitz and $\RF$ is absolutely
continuous (\cref{lem:skorokhod}), so $\dd v$, $\dd\Theta$ and $\dd c$ have no part singular with
respect to $\dd\tau$. The only measure below which may have one is $\dd\pfin$, and it enters with
the favourable sign; so the pointwise computation that follows is an inequality between measures
and not only between densities. The state is of bounded variation with values in $D_3$ and $V$
extends to a $C^1$ function with bounded gradient, so the chain rule for a $C^1$ function of a
function of bounded variation \cite[Theorem~3.96]{afp2000} gives
\[
 \mathrm{d}W=V_v\dd v+V_a\bigl(\mathrm{d}\Theta-\mathrm{d}\pfin^{\,c}\bigr)+V_c\dd c+[\text{jumps of }a].
\]
A downward jump at $u$, at fixed $(v,c)$, changes $W$ by
\[
 W(u)-W(u-)=-\int_{a(u)}^{a(u-)}V_a(v(u),r,c(u))\dd r
       \le-\mathcal F(\{u\}),
\]
by (A1) and \eqref{eq:fan-atom}; the continuous part of $\mathrm{d}\pfin$ contributes
$-(V_a-\fan)\dd\pfin^{\,c}\le0$ in the same way. For the rest use \eqref{eq:dyn} and \eqref{eq:S}.
Where $c>h(v)$,
\[
 V_vv'+V_a\Theta'+V_cc'+q_+-2\pi=\rho P\bigl[V_v\cos b+(V_a-V_c)\sin b\bigr]-V_v+q_+-2\pi ,
\]
whose supremum over $\rho\in[0,1]$ and $b\in[0,\pi]$ is $\Phi(V_v,V_a-V_c)$: this is (A2). Inward
headings are the half $\cos b<0$; pricing them costs no more than replacing $(V_v)_+$ by $|V_v|$.
Where $c=h(v)$, the same computation with $c'=h'(v)v'$ gives (A3)
(\lean{Shore.lifted_sliding_dissipation}). The forced finalisation at $a=\pi/2$ is an admissible
finalisation rate. At $v\to0$ on the surface $c'$ is unbounded, but $V_ch'(v)$ is bounded for the
$V$ of \cref{sec:cert}, and $V$ is $C^1$ in $(v,a,c)$.

\emph{Dives.} While $v<0$ the point contributes no current coverage, $h(v_+)=0$, and the fan reward vanishes because
$v_\sigma<0$. Let the unfolded angle advance by $D$ during the dive, jumps included. Then $g=\Theta$,
so $\RF=\max(\RF_0,\Theta)$ and the state moves from $(0,a_0,c_0)$ to $(0,a_1,c_1)$ with
\[
  c_1=\max(c_0-D,0),\qquad a_1\le\min(a_0+D,\tfrac\pi2),
\]
and $V(0,a_1,c_1)\le V(0,\min(a_0+D,\frac\pi2),c_1)$ because $V_a\ge0$. Let $q^{\rm nf}(u)$ be the value
of $q$ at the advance $u\in[0,D]$ if nothing is finalised beyond what the constraint $a\le\pi/2$
forces. Its slope in $u$ is $0$, then $+1$ or $-1$, then $0$, so it is monotone and
$q^{\rm nf}\le\max(q_0,q_1)$; and $q(\tau)\le q^{\rm nf}(u(\tau))$, since $\RF$ does not depend on the
finalisation and $\pfin\ge\max(\pfin(\text{start}),\Theta-\frac\pi2)$. As $Q\le\frac\pi2+\arccos m_*<2\pi$, the
reward of the dive is at most $-(2\pi-Q_+)(\tau_1-\tau_0)$.
If $D=0$, then $c_1=c_0$ and $a_1\le a_0$, so (A1) gives $\Delta W\le0$ and the time reward is
nonpositive. This case needs no instance of (A4). In the remainder assume $D>0$.

By \cref{lem:divetime} the dive lasts at least $\Delta\tau_{\min}(\min(D,\pi))$, and for
$D>\pi$ the arrival state is the one reached with $D=\pi$. So the instance of (A4) at
$\min(D,\pi)$ is exactly the inequality $\Delta W+\text{reward}\le0$ of the dive.
\end{proof}

\section{Proof of the main theorem}\label{sec:proof}

Suppose a path has $\CR<\Csp$; it is competitive at a level $m>m_*$. By \cref{lem:polygonal}
there is a polygonal path which is competitive at a level $m'>m_*$. By \cref{prop:mu,lem:strict},
$\mu_{m_*}(\tau)\ge2\pi+2\kappa\log(m'/m_*)$ for all $\tau$. Times with $v\ge0$ are unbounded: at a new
maximum of $R$ we have $R(t)\ge h_t(n)\ge m't$. Fix such a $\tau_0$ and let $\tau_1\to\infty$ through
such times. The left side of the inequality of \cref{thm:A} grows linearly in $\tau_1$, the right
side is fixed. This contradicts \cref{thm:A} as soon as a function $V$ with (A1)--(A4) exists;
\cref{thm:cert} provides one. With \cref{prop:spiral}, $C_*=\Csp$. \qed

\section{The certificate}\label{sec:cert}

\paragraph{The function.} In the cap-angle coordinate $\beta=h(v)\in[0,\arccos m_*]$ (so
$\mathrm{d}\beta/\mathrm{d}v=\cot\beta$, $P=\cos\beta/m_*$) let
\[
  V(v,a,c)=G(\beta,a,c)+F(v,a),\qquad F(v,a)=\int_{-\beta}^{\clip(a,-\beta,\beta)}\bigl(v+\log\cos s\bigr)\dd s,
\]
so that $F_a=\fan$ and $F_v=\clip(a,-\beta,\beta)+\beta$; $F$ is $C^1$. The function $G$ is a tensor cubic
B-spline on $[0,\frac32]\times[-\frac85,\frac85]\times[0,\frac32]$ with $23\times29\times14$ coefficients and
simple interior knots, all of them double-precision numbers, hence dyadic rationals. It has four
exact structural properties:
\begin{enumerate}[leftmargin=2em,itemsep=1pt]
\item\label{it:corner} \emph{The corner.} $G_\beta(0,a,c)=0$ and $G_c(0,a,0)=0$ (coefficient identities $c_{1jk}=c_{0jk}$, $c_{0j1}=c_{0j0}$).
Hence $V_v=G_\beta\cot\beta+F_v$ is continuous up to $v=0$, $V$ is $C^1$ on $D_3$, and the surface
form (A3), in which $V_v+V_ch'(v)=(G_\beta+G_c)\cot\beta+F_v$, has a limit at the corner.
\item\label{it:mono} \emph{Monotonicity}, that is (A1), that is $G_a\ge0$: by the derivative formula for B-splines \cite{deboor1978}, $G_a$ is a
combination of nonnegative B-splines with coefficients proportional to $c_{i,j,k}-c_{i,j-1,k}$, and these
differences are nonnegative (the smallest one outside the tied block is $1.0006\cdot10^{-7}$).
\item\label{it:flat} A \emph{flat patch}: the $7\times6\times5$ block of coefficients which is active on the knot cells
around the spiral state is constant along $a$, so $G_a\equiv0$ there.
\item\label{it:jet} \emph{The jet correction.} One real number
$\delta_0=-1.7182085816526\ldots\cdot10^{-11}$ is added to one tied group of coefficients, and is
fixed by the requirement $K_\beta(\beta_*)\cot\beta_*+2\pi=\delta_*P_*^2/k_*$, where
$K(\beta,a)=G(\beta,a,\beta)$. This is the one scalar equation which makes \cref{lem:jet} hold
\emph{exactly} and not merely to the accuracy of the fit, and it is needed because the certificate
asserts equality in (A3) at the spiral state: a slack of either sign there, however small, would
make \cref{thm:cert} false or its equality claim wrong. $K_\beta(\beta_*)$ is affine in $\delta_0$, so
the equation is linear and has a unique solution; it is enclosed in a ball of radius $10^{-53}$, and
all enclosures below hold for every number of that ball.
\end{enumerate}
\Cref{app:cert} writes down the $C^1$ extension of $V$ past $\beta=0$ and $c=0$ promised in
\cref{sec:verif}. The saved output of the fit satisfies the identities of items 1 and 3 only up to
$2.7\cdot10^{-9}$; the certified function is the table after three exact projections (521 coefficients
are overwritten by copies of other coefficients), which is again a table of doubles and is stored as
such (\file{output/lifted-three-state-certificate-projected-2026-09-19.json}).

\begin{lemma}[Exact tightness at the spiral]\label{lem:jet}
Let $H(\beta,a)=-\Phi\bigl(V_v+V_c\,h'(v),\,V_a\bigr)$ be the slack of {\rm(A3)} on the surface
$c=h(v)$. At the spiral state $(\beta_*,\alpha_*)$ the value and
the gradient of $H$ vanish; the two equations \eqref{eq:contact} and \eqref{eq:pitch} of
\cref{def:constants} are exactly what makes this happen.
\end{lemma}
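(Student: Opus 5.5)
Reduce $H$ and its gradient at $(\beta_*,\alpha_*)$ to explicit quantities of the flat patch, and read off \eqref{eq:contact}, \eqref{eq:pitch} and the jet normalisation of item~\ref{it:jet}.

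\textbf{Simplifying $H$ on the patch.} By item~\ref{it:flat}, $G_a\equiv0$ on a neighbourhood of the spiral state. So $V_a=F_a=\fan(v,a)=v+\log\cos a$ there, since the spiral state has $|a|<\beta$ and positive fan. On the surface $c=\beta$ write $K(\beta,a)=G(\beta,a,\beta)$. Because $G_a=0$ on the patch, $K$ depends on $\beta$ alone, and $K_\beta=G_\beta+G_c$. The first argument of $\Phi$ in (A3) is therefore $X=K_\beta(\beta)\cot\beta+a+\beta$, using $F_v=\clip(a,-\beta,\beta)+\beta=a+\beta$. The second is $Y=v+\log\cos a$ with $v=-\log\cos\beta$. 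Further $q=a+c=a+\beta>0$ and $P=\cos\beta/m_*$. Hence
\[
 H(\beta,a)=X-\tfrac{\cos\beta}{m_*}\sqrt{X^2+Y^2}+2\pi-a-\beta=K_\beta\cot\beta+2\pi-\tfrac{\cos\beta}{m_*}\sqrt{X^2+Y^2}.
\]

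\textbf{Value.} I would evaluate $H$ at $\beta_*$, $a=\alpha_*$. Here $Y_*=\log(\cos\alpha_*/\cos\beta_*)$. With $\cos\alpha_*=k_*/P_*$ and $\cos\beta_*=m_*P_*$ this gives $Y_*=\log\bigl(k_*/(m_*(1+k_*^2))\bigr)=\delta_*$. Item~\ref{it:jet} fixes $K_\beta\cot\beta_*+2\pi=\delta_*P_*^2/k_*$, so $X_*=\delta_*P_*^2/k_*-2\pi+\alpha_*+\beta_*$. By \eqref{eq:contact}, $\alpha_*+\beta_*-2\pi=-k_*\delta_*$, so $X_*=\delta_*(P_*^2-k_*^2)/k_*=\delta_*/k_*$. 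Then $\sqrt{X_*^2+Y_*^2}=\delta_*P_*/k_*$, and $\frac{\cos\beta_*}{m_*}\sqrt{\cdot}=\delta_*P_*^2/k_*$, which equals the first two terms. So $H=0$.

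\textbf{Gradient.} $H_a$ only involves $X_a=1$ and $Y_a=-\tan a$. Thus
\[
H_a=-P\,\frac{X-Y\tan a}{\sqrt{X^2+Y^2}},
\]
and $X_*-Y_*\tan\alpha_*=\delta_*/k_*-\delta_*/k_*=0$, since $\tan\alpha_*=1/k_*$. This is the first-order condition of the heading: the optimal heading direction is $(X,Y)\propto(1,k)$, the spiral's pitch.

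For $H_\beta$ I expect the main obstacle. It involves $K_{\beta\beta}(\beta_*)$, a quantity of the spline that item~\ref{it:jet} does not fix. My plan is to write
\[
H_\beta=\partial_\beta(K_\beta\cot\beta)\Bigl(1-\tfrac{P X}{\sqrt{X^2+Y^2}}\Bigr)+(\ldots).
\]
The coefficient of the unknown is $1-P_*\cdot\frac{1}{P_*}=1-P\cos\alpha\cdot$(suitable factor). I would check whether it vanishes, using $X/\sqrt{X^2+Y^2}=1/P_*$ and $P=\cos\beta_*/m_*=P_*$; indeed $PX/\sqrt{\cdot}=1$. So $K_{\beta\beta}$ drops out. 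The remaining terms come from $\partial_\beta$ of $P$, of $Y$ ($Y_\beta=\tan\beta$) and of the explicit $\beta$ in $X$. They combine into
\[
\tfrac{\sin\beta_*}{m_*}\sqrt{X_*^2+Y_*^2}-P_*\frac{X_*+Y_*\tan\beta_*}{\sqrt{X_*^2+Y_*^2}},
\]
which after substitution should reduce to $\delta_*(1+k_*^2)=k_*(k_*+\cot\beta_*)$, i.e.\ \eqref{eq:pitch}. This computation is exact and symbolic. The only rigorous numerical input is that $\delta_0$, enclosed to $10^{-53}$, satisfies item~\ref{it:jet} exactly. I would double-check it with an exact jet in ball arithmetic.
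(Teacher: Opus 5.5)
Your proposal is correct and is essentially the paper's own computation: on the flat patch you reduce $H$ to $K_\beta\cot\beta+2\pi-P\sqrt{X^2+Y^2}$, get the value from the jet normalisation together with \eqref{eq:contact}, get the $a$-derivative from $\tan\alpha_*=1/k_*$, and in $H_\beta$ the unknown $K_{\beta\beta}$ term drops out because $PX/N=1$ at the spiral state. The step you left as ``should reduce'' does go through: the remainder is $(P/N)\bigl[N^2\tan\beta-X-Y\tan\beta\bigr]$, and substituting $X=\delta_*/k_*$, $Y=\delta_*$, $N=\delta_*P_*/k_*$, $P_*^2=1+k_*^2$ gives exactly $\delta_*(1+k_*^2)=k_*(k_*+\cot\beta_*)$.
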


The short computation is in \cref{app:cert}. The vanishing is also necessary: the spiral is a closed
orbit of the relaxed system with zero mean reward, so every valid storage has equality along it.

\begin{theorem}[Certificate]\label{thm:cert}
The function $V$ satisfies {\rm(A1)--(A4)}, with equality in {\rm(A3)} only at the spiral
state and strict inequality in {\rm(A2)}.
\end{theorem}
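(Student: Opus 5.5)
The proof is a computer-assisted verification, organised by which hypothesis is checked and how. (A1) needs no box arithmetic. By the structural item~\ref{it:mono}, $G_a$ is a nonnegative combination of B-splines, and $F_a=\fan$ by construction, so $V_a=G_a+\fan\ge\fan$ on all of $D_3$ with exact rational arithmetic on the coefficient differences. The corner identities (item~\ref{it:corner}) give $C^1$ regularity and the finite limits at $v\to0$ required in (A3); these are again checked exactly on the stored double table.

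For (A2) and (A3) I would work in the coordinates $(\beta,a,c)$, where $P=\cos\beta/m_*$ and $V_v=G_\beta\cot\beta+F_v$. Cover $D_3$, respectively the surface $c=\beta$, by boxes and evaluate $\Phi$ in Arb ball arithmetic. The B-spline and its derivatives are evaluated on each knot cell as polynomials with interval arguments. $F_v=\clip(a,-\beta,\beta)+\beta$ is piecewise closed form, and $\cot\beta$ is handled near $\beta=0$ through $G_\beta\cot\beta$, where $G_\beta(0,\cdot)=0$ so $G_\beta/\sin\beta$ is bounded by a divided difference. The boxes are subdivided adaptively until the upper bound of $\Phi$ is negative. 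For (A2) this gives strict inequality everywhere. The kink of $q_+$ and of $Y_+$ is harmless, since these are monotone and interval-extendable.

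For (A3), the slack $H$ vanishes at $(\beta_*,\alpha_*)$, so no box containing that point can be certified by a sign check. I would carve out a small neighbourhood $N$ of the spiral state. Outside $N$ I would use boxes as above. Inside $N$ I would use \cref{lem:jet}: $H(\beta_*,\alpha_*)=0$ and $\nabla H=0$ exactly, which is guaranteed by the equations \eqref{eq:contact}, \eqref{eq:pitch} and the jet correction of item~\ref{it:jet}. The jet is evaluated on the $10^{-53}$ ball containing $\delta_0$ and on the interval-Newton box for $(k_*,m_*)$. An interval enclosure of the Hessian of $H$ over all of $N$, shown negative definite (for instance by Gershgorin or by an interval Cholesky factorisation), then gives $H>0$ on $N\setminus\{(\beta_*,\alpha_*)\}$ by Taylor's theorem with integral remainder. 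The flat patch of item~\ref{it:flat} makes $V_a=\fan$ on $N$, which keeps $H$ smooth enough there, with no clipping kinks, for this second-order argument to apply. Strictness away from the spiral follows, giving equality only at the spiral state.

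(A4) is a four-parameter inequality in $(a,c,D)$ at $v=0$, and I expect it to be the main obstacle because $\Delta\tau_{\min}(D)\to0$ as $D\to0$. For $D$ bounded below I would check it by boxes directly. For small $D$ I would divide by $D$. The left side is $\int_0^D\bigl(V_a-V_c\bigr)\,\mathrm{d}u$ along the segment, with clipping at $a=\pi/2$ and $c=0$ only decreasing it since $V_a\ge0$. The right side is at least $(2\pi-Q_+)\kappa D(1-O(D^2))$. So near $D=0$ it suffices to verify the pointwise bound $V_a-V_c\le(2\pi-q_+)\kappa$ with a margin on an interval box, together with a remainder estimate, matching the two regimes at some threshold $D_0$. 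If the margin fails near the corner $c=0$, I would split there and use the clipped formula $c_1=0$ explicitly. The total box count, about $10^6$, is driven by (A3) near $N$ and by this small-$D$ regime.
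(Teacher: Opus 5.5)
Your overall plan is the paper's. You check (A1) from the nonnegative coefficient differences and cover (A2), (A3) and the large dives by boxes in $(\beta,a,c)$. You handle an exclusion box $N$ about $(\beta_*,\alpha_*)$ by the exact jet of \cref{lem:jet} plus an interval Hessian. For short dives you use a first-order bound that is essentially the paper's first implementation, through $(2\pi-(a+c)_+)\kappa-(G_a-G_c)(0,a,c)$ and the concavity of $\Delta\tau_{\min}$.

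\textbf{The Hessian on $N$ has the wrong sign.} $H$ is the slack, (A3) requires $H\ge0$, and $H$ and $\nabla H$ vanish at the spiral state. So you must show that the Hessian of $H$ is \emph{positive} definite on $N$; the paper's enclosure has diagonal entries about $38$ and $0.154$. A negative definite Hessian would give $H<0$ on $N\setminus\{(\beta_*,\alpha_*)\}$, which violates (A3). Your mention of an interval Cholesky factorisation suggests a slip, but as written the inference is false. Also, smoothness on $N$ comes from staying inside the open branch $|a|<\beta$, $a+\beta>0$. The flat patch does a different job: $G_a\equiv0$ there makes $H$ depend on the spline only through $K_\beta(\beta)$, which is why one scalar $\delta_0$ makes the jet exact.

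\textbf{The enclosure method would not terminate near $N$.} You bound the slacks by the natural interval extension: polynomials with interval arguments, positive parts treated as monotone. Its overestimate is first order in the box width even where $\nabla H$ vanishes. Near the spiral, $H$ is a difference of two nearly equal terms and grows only like $0.08\,d^2$ in the $a$-direction at distance $d$. So boxes would have to shrink like $d^2$ rather than like $d$. The number of boxes around $N$ would then grow like $r_N^{-2}$ times a large constant, instead of logarithmically in $1/r_N$. With certified lower bounds as small as $10^{-14}$ just outside $N$, this is not feasible. The paper instead uses the mean-value form $S(x_0)-\sum_i\sup|\partial_iS|\,r_i$ with enclosed derivatives, whose error is second order at a critical point. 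That is why it handles kinks by hulls of branch gradients (Clarke's generalized gradient with Lebourg's mean value theorem) rather than by monotonicity. It is also why boxes cut by $c=\beta$ are expanded about a corner, so that the mean-value segments stay in $\{c\ge\beta\}$.

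\textbf{Two smaller points in (A4).} First, $V_a\ge0$ justifies the clip at $a=\pi/2$, but the clip at $c=0$ discards the term $-V_c$, which has no definite sign. The paper controls it with the corner identity $G_c(0,a,0)=0$. Second, your pointwise bound uses $q_+$ along the segment, while (A4) uses $Q_+$, which can exceed it by up to $D$. The margin has to absorb that term explicitly.
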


\begin{proof}[Computer-assisted proof]
All computations use Arb ball arithmetic \cite{johansson2017arb} through python-flint.

\emph{Constants.} An interval Newton step for \eqref{eq:contact}--\eqref{eq:pitch}, with the Jacobian
obtained by automatic differentiation, proves that $(k_*,m_*)$ exists and is unique in the box of
\cref{def:constants} (\file{scripts/check_spiral_constants_box.py}). Each implementation then pins it
down in its own way. The second refines a numerical solution by twelve ordinary Newton steps at
$400$ bits and applies an interval Newton step to the box of radius $10^{-90}$ about it: the step
returns an enclosure strictly inside that box, so the root lies there, and by the uniqueness just
proved it is $(k_*,m_*)$. The first uses a Krawczyk operator with a point inverse, in the
coordinates $(\alpha,\beta)$ rather than $(k,m)$ and on a box of radius $10^{-65}$ about a stored
midpoint. What is propagated below has radius $10^{-55}$ in the first implementation and below
$10^{-118}$ in the second, at every working precision.

\emph{The region covered.} In the coordinate $\beta$ the reachable state set of \cref{lem:state} is
\[
  \hat D_3=\{(\beta,a,c):\ 0\le\beta\le c\le\arccos m_*,\ |a|\le\tfrac\pi2\},
\]
the bound $v\le\log\frac1{m_*}$ of $D_3$ being the same as $c\le\arccos m_*$ on the surface. Both
implementations run their covers over exactly this set. That the cells they keep tile $\hat D_3$
with no gap, and that the dive slabs tile $(0,\pi]$, is checked separately
(\file{scripts/check_verifier_domain_2026_09_20.py}); a gap there would leave the theorem unproved
without making any checked inequality false, so that check re-derives the filters from the knot
vectors instead of importing them.

\emph{Covers.} On a knot cell $G$ and its partial derivatives are polynomials with exact rational
(or, in the corrected group, ball) coefficients. On a box, a lower bound for a slack $S$ is
$S(x_0)-\sum_i\sup|\partial_iS|\,r_i$, with the partial derivatives enclosed over the box. At the
kinks --- of $F_v$, of $\fan$, of $(c+a)_+$, of the positive part inside $\Phi$, of the norm at the
origin and of the caps in (A4) --- the enclosures are hulls over the branches; this encloses
Clarke's generalized gradient, and the mean value inequality is Lebourg's \cite{clarke1983}. Boxes are
bisected until the bound is positive; the two implementations name the slacks of (A2), (A3) and
(A4) \texttt{H3}, \texttt{H4} and \texttt{T}. \Cref{app:cert} describes the treatment of the surface
$c=\beta$, of the corner $\beta=0$ and of short dives.

\emph{The spiral.} On a box $N$ around $(\beta_*,\alpha_*)$ inside the flat patch and the smooth
branch, the Hessian of $H$, enclosed with second-order automatic differentiation, is positive
definite; with \cref{lem:jet} and convexity of $N$, $H\ge0$ on $N$ with equality only at the spiral
state. Outside $N$ the surface is covered as above. \Cref{tab:cover} gives the counts.
\end{proof}

\begin{table}[t]
\centering\small
\begin{tabular}{@{}lrrrr@{}}
\toprule
 & \multicolumn{2}{c}{first implementation} & \multicolumn{2}{c}{second implementation}\\
\cmidrule(lr){2-3}\cmidrule(l){4-5}
hypothesis & boxes & least bound & boxes & least bound\\
\midrule
(A3) surface, outside $N$ & 365{,}466 & $9.1\cdot10^{-15}$ & 259{,}560 & $1.0\cdot10^{-13}$\\
(A2) all headings & 816{,}750 & $6.1\cdot10^{-7}$ & 550{,}362 & $4.8\cdot10^{-7}$\\
(A4) small dives & 650 & $5.5\cdot10^{-4}$ & 4{,}284 & $3.4\cdot10^{-5}$\\
(A4) other dives & 15{,}224 & $6.1\cdot10^{-7}$ & 11{,}154 & $1.6\cdot10^{-7}$\\
half-widths of $N$ & \multicolumn{2}{r}{$7.3\cdot10^{-6}\times9.8\cdot10^{-6}$} & \multicolumn{2}{r}{$2^{-13}\times2^{-13}$}\\
Hessian of $H$ on $N$ & \multicolumn{2}{r}{$\bigl[\begin{smallmatrix}38\pm0.75&-0.2\pm0.04\\&0.154\pm0.0004\end{smallmatrix}\bigr]$}
 & \multicolumn{2}{r}{$\bigl[\begin{smallmatrix}40\pm3.8&0\pm0.31\\&0.15\pm0.007\end{smallmatrix}\bigr]$}\\
\bottomrule
\end{tabular}
\caption{The two verifications of \cref{thm:cert}. Box counts are identical at 96, 128 and 192 bits.
The second implementation imports nothing from the first: own constants, spline pieces by exact
interpolation of Cox--de Boor values, automatic differentiation instead of hand-derived gradients,
its own treatment of $\beta=0$, of $N$ and of small dives. Point values of the three slacks
agree to $10^{-33}$, $\delta_0$ to 40 digits.}
\label{tab:cover}
\end{table}

\begin{figure}[t]
\centering
\includegraphics[width=0.68\linewidth]{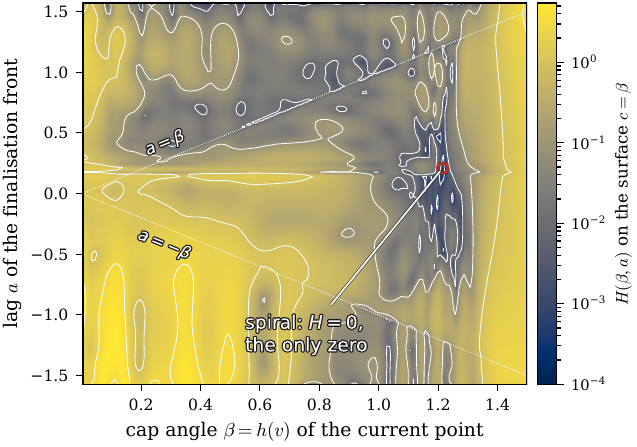}\\[4pt]
\includegraphics[width=0.86\linewidth]{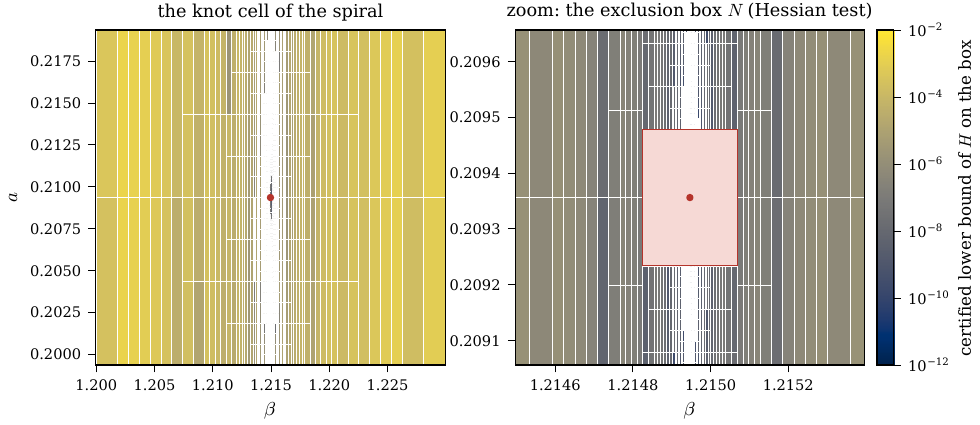}
\caption{Top: the slack $H$ of hypothesis (A3) on the surface $c=\beta$ (floating-point
evaluation, logarithmic colour scale). It vanishes only at the spiral state. Bottom: the certified
boxes of the second implementation on the knot cell of the spiral, and the exclusion box $N$ on
which the exact jet and the interval Hessian are used.}
\label{fig:H4}
\end{figure}

\section{What is checked, and what is not}\label{sec:checks}

The proof consists of (i) the prose reduction of
\cref{sec:polygonal,sec:lift,sec:book,sec:traj,sec:verif}, (ii) Lean-checked lemmas, and (iii) the
certificate.

\paragraph{Lean.} Eight modules (Lean~4 v4.33.0, mathlib \cite{mathlib2020} at \texttt{db584cd6})
without \texttt{sorry}; the only axioms are \texttt{propext}, \texttt{Classical.choice} and
\texttt{Quot.sound}. The compiled modules were then replayed through the kernel with
\texttt{leanchecker} (\file{scripts/check-lean-kernel-replay.sh}), which re-accepted every
declaration. That is the same kernel run again on the build artefacts rather than a second
implementation of it; what it adds to the axiom list is that no declaration reached an
\texttt{.olean} without the kernel having accepted it.

\begin{center}\small
\begin{tabular}{@{}>{\raggedright\arraybackslash}p{0.34\linewidth}p{0.6\linewidth}@{}}
\toprule
module & statements used above\\
\midrule
\lean{PolygonalReduction} & \cref{lem:polygonal}\\
\lean{KneserPoulsenLine}, \lean{LiftedCoverage}, \lean{AngularUnfolding} & \cref{lem:vertices,lem:KP} and the unfolding\\
\lean{MonotoneSelector} & concavity of $\log\cos$, exchange, angle part of \cref{lem:selector}\\
\lean{LiftedBookkeeping} & per-direction bound, widening, $h'\ge\kappa$, dive ratio\\
\lean{RunningMaxFront} & the two facts behind \eqref{eq:S}\\
\lean{LiftedHeading} & supremum over headings in (A2), (A3)\\
\bottomrule
\end{tabular}
\end{center}
Not formalized: the construction of the polygonal path from its vertex times; Tonelli's theorem
and the push-forward $\mathrm{d}\varphi\mapsto\mathrm{d}\pfin$ in \cref{thm:book}; existence of
maximisers (\cref{lem:attained}); the absolute continuity statements of \cref{lem:skorokhod}; the
chain rule in \cref{thm:A}; the planar length bound for dives (\cref{lem:divetime}).

\paragraph{Tests.} All quantities of \cref{thm:book,thm:A} were recomputed from the geometry of 73
polygonal paths with 101 dives --- perturbed spirals with chords, radial spikes, runs along the rim
$R=(1+\epsilon)m_*t$, backward retracing, vertices at and departures from the origin, waiting, other
pitches. Every inequality of the reduction held, $V+\text{reward}$ never increased (\cref{fig:W}),
and the state-space bounds of \cref{lem:state} were \emph{measured} rather than imposed.
\Cref{app:test} reports this in detail. The test is in floating point and is not part of the proof.

\paragraph{The audit.} An internal audit rederived (i), compared every cited Lean
statement with its use, wrote the second implementation of (iii), and ran the geometric test; it
found one wrong sentence (on dives through a vertex at the origin), repaired here by the planar
length argument, and a few minor inaccuracies, all corrected. The subsequent verification review reproduced both
certificates and the Lean build, and prompted the explicit source domain $\Omega$, the
initialization of $\pfin$ at $L$, the push-forward identity \eqref{eq:pushforward}, and the endpoint
and atom conventions in \eqref{eq:reward} and \eqref{eq:fan-atom}; these changes leave the
certificate inequalities unchanged. The trusted base of (iii) is Arb, python-flint and CPython.

\paragraph{Reproduction.} Everything needed to rerun the computer-assisted part is included
with this submission as ancillary files: the certified coefficient table (sha256
\texttt{99c016ec\dots 4ccac6}), both verifiers, the fit, the tests and the eight Lean modules, with
the Python and mathlib versions pinned. The file names quoted in this paper are paths inside that
directory (\file{anc/} on arXiv), and its \file{README.md} lists the commands. The two
verifications take about ninety and sixty seconds on a laptop.

\appendix

\section{The lift: construction, attainment, and the domain of directions}\label{app:lift}

\paragraph{Construction of the lift.} Along a polygonal path choose a lift $\tilde\theta(s)$ of the
polar angle which is continuous while $R>0$; at a passage through the origin it jumps, and the jump
is chosen of absolute value at most $\pi$. During a wait at the origin keep the incoming angle fixed
and put any jump at departure; before the first departure use its outgoing angle. Choose the
right-continuous representative. On an edge which misses the origin the angle is monotone and turns
by less than $\pi$, so $\tilde\theta$ has locally bounded variation on $(0,\infty)$ and its signed
variation function $\Theta$ of \cref{sec:lift} is well defined.

\begin{lemma}[Attainment and the tie-breaking]\label{lem:attained}
For $\varphi\in\Omega$ the supremum defining $E(\varphi)$ is attained, the set $M$ of maximisers is
nonempty and compact, and the final source $\sigma(\varphi)$ --- the maximiser with the largest
angle, earliest among those --- exists.
\end{lemma}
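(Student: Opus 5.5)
The plan is to reduce the supremum to one over a compact set of times on which the score is upper semicontinuous. Fix $\varphi\in\Omega$ and write $S(s)=z(s)+\log\cos(\varphi-\Theta(s))$ for $s\le\tau_1$, with the value $-\infty$ when $R(s)=0$ or $|\varphi-\Theta(s)|\ge\pi/2$, as in \cref{sec:book}. Since $E(\varphi)>-\infty$, some $s_1$ has $S(s_1)>-\infty$. In linear time, $R(s)\le s$ together with $\log\cos\le0$ gives $S(s)\le\log s$. Hence the times with $S(s)\ge S(s_1)$ lie in $[e^{S(s_1)},e^{\tau_1}]$, a compact interval bounded away from $0$. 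It therefore suffices to show that the superlevel sets $\{s\in[e^{S(s_1)},e^{\tau_1}]:S(s)\ge\lambda\}$ are closed for every real $\lambda$. Upper semicontinuity then gives attainment, and $M=\{S=E(\varphi)\}$ is closed in a compact interval, so it is nonempty and compact.

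Upper semicontinuity of $S$ is the heart of the proof, and it is where right-continuity enters. Away from the finitely many jump times of $\Theta$ in the window (\cref{lem:jumps}, \cref{lem:skorokhod}), $R$ and $\Theta$ are continuous, so $S$ is continuous as an extended-real function. At these points I will use that $\log\cos\to-\infty$ at $\pm\pi/2$ and that $\log R\to-\infty$ as $R\to0$. At a jump time $u$ we have $R(u)=0$, and $R$ is continuous, so $z(s)\to-\infty$ as $s\to u$ from either side. Since $\log\cos\le0$, this gives $\limsup_{s\to u}S(s)=-\infty=S(u)$. So $S$ is upper semicontinuous everywhere, and it is continuous at every point with $S>-\infty$. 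The same observation shows that no maximiser has $R=0$, so $\Theta$ is continuous at every point of $M$.

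The tie-breaking is handled next. The map $s\mapsto\Theta(s)$ restricted to $M$ is continuous, because $\Theta$ is continuous at each point of $M$. On the compact set $M$ it therefore attains its maximum $\Theta^\ast$. The set $M^\ast=\{s\in M:\Theta(s)=\Theta^\ast\}$ is then a closed subset of $M$, since it is a preimage under a map continuous on $M$. Being compact and nonempty, $M^\ast$ has a least element, which is $\sigma(\varphi)$.

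The main obstacle is making sure the upper semicontinuity argument really does not need continuity of $\Theta$. The point is that jumps occur only at zero radius, where the score is $-\infty$ and is also forced to $-\infty$ nearby. If one instead wanted closedness of $M^\ast$ at a point where $\Theta$ jumps, right-continuity would be the fallback. A decreasing sequence in $M^\ast$ converges to its infimum $s$, and right-continuity of $\Theta$ gives $\Theta(s)=\Theta^\ast$; this is the only limit direction needed to obtain a minimum. I would present it this way to match the remark in \cref{sec:book}.
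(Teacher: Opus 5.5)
Your proof is correct, and its first half is the paper's argument. A finite lower bound on the score forces $s\ge R(s)\ge e^{S(s_1)}$. The score is upper semicontinuous because every jump of $\Theta$ sits at zero radius, where $z\to-\infty$ from both sides. The finiteness of the jump set that you take from \cref{lem:skorokhod} is not needed for this.

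The tie-breaking is where you take a different route.
\begin{itemize}
\item The paper uses monotonicity of $\Theta$ to identify the top angle as $\Theta(\max M)$. It then uses right-continuity to show that the infimum of the top-angle set belongs to that set. This step needs only one-sided regularity of $\Theta$, which is the point of the paper's remark in \cref{sec:book}.
\item You note instead that every maximiser has finite score, hence $R>0$, hence is a continuity point of $\Theta$ by \cref{lem:jumps}. So $\Theta|_M$ is continuous on the compact set $M$, the top-angle set $M^\ast$ is closed, and its least element exists.
\end{itemize}

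Your version is just as rigorous and a little more transparent, because it reuses the fact already needed for upper semicontinuity instead of a separate one-sided limit. It also shows that the top-angle set is closed. So the caveat in the paper's parenthetical, that $\Theta$ might jump at the supremum of that set, cannot occur inside $M$: that supremum is $\max M$, where $R>0$.

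Your fallback paragraph reproduces the paper's right-continuity argument, so either presentation works.
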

\begin{proof}
A fixed finite lower bound on the score bounds $R$ away from zero, hence bounds physical source
time away from zero by $R(s)\le s$; on the resulting compact set the score is upper
semicontinuous, because $z$ is continuous where $R>0$ and, by \cref{lem:jumps}, the jumps of
$\Theta$ all occur where $R=0$ and the score is $-\infty$. So $M$ is nonempty and compact. Since
$\Theta$ is nondecreasing, the largest angle on $M$ is $\theta_{\max}=\Theta(\max M)$; the set
$S=\{s\in M:\Theta(s)=\theta_{\max}\}$ is nonempty, and if $s_n\downarrow\inf S$ in $S$ then
$\inf S\in M$ because $M$ is closed and $\Theta(\inf S)=\lim\Theta(s_n)=\theta_{\max}$ because
$\Theta$ is right-continuous; so $\min S$ exists. (The set $S$ itself need not be closed: $\Theta$
may jump up at $\sup S$. Right-continuity, and not continuity, is what is used.)
\end{proof}

\begin{lemma}[The domain is an interval up to a countable set]\label{lem:omega}
$\Omega$ is the union of the open intervals $(\Theta(s)-\pi/2,\Theta(s)+\pi/2)$ over the times
$s\le\tau_1$ with $R(s)>0$, and $(L,U)\setminus\Omega$ is countable. If $R(e^{\tau_1})>0$ then
$U=\Theta(\tau_1)+\pi/2$.
\end{lemma}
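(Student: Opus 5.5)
The plan is to read off the domain directly from the definition of the score and then use the structure of $\Theta$ from \cref{lem:jumps}. By definition, $\varphi\in\Omega$ exactly when some $s\le\tau_1$ has finite score $z(s)+\log\cos(\varphi-\Theta(s))$. By the convention of \cref{sec:book}, this means $R(s)>0$ and $|\varphi-\Theta(s)|<\pi/2$. So $\Omega$ is the union of the open intervals $I_s=(\Theta(s)-\pi/2,\Theta(s)+\pi/2)$ over the admissible $s$, which is the first assertion. In particular $\Omega$ is open, hence a countable union of disjoint open intervals (its components).

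For countability of $(L,U)\setminus\Omega$, I will show that between consecutive components the gap consists of a single point, so that each complementary point in $(L,U)$ is the endpoint of a component and there are countably many. Suppose $\varphi_1<\varphi_2$ lie in $(L,U)\setminus\Omega$. Since $L=\inf\Omega$ and $U=\sup\Omega$, there are sources $s_-$ with $\Theta(s_-)+\pi/2>\varphi_1$, hmm, more carefully: there is a point of $\Omega$ below $\varphi_1$ and one above $\varphi_2$, hence admissible times $s_-$, $s_+$ with centres $\Theta(s_-)<\varphi_1$ and $\Theta(s_+)>\varphi_2$. Because $\Theta$ is nondecreasing, $s_-<s_+$. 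On $[s_-,s_+]$ consider the set of admissible times (those with $R>0$). The function $\Theta$ is continuous there by \cref{lem:jumps}, and jumps only at zero-radius times, each jump being at most $\pi$. Take $u=\sup\{s\in[s_-,s_+]:R(s)>0,\ \Theta(s)<\varphi_1\}$, or work directly with the set of centres $C=\{\Theta(s):s\le\tau_1,\ R(s)>0\}$. The key claim is that every gap of $C$ inside the range has length at most $\pi$. Continuity fills any gap arising without a zero-radius time. At a zero-radius stretch, $\Theta$ is constant during waits and the jump at departure or passage is at most $\pi$. The centres immediately before and after are limits of admissible centres, by right-continuity and continuity where $R>0$.

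Given that gaps of the closure $\bar C$ have length $\le\pi$, the open intervals of half-width $\pi/2$ around points of $C$ cover everything between $\inf C-\pi/2$ and $\sup C+\pi/2$, except possibly the midpoints of gaps of length exactly $\pi$ (and points near limit centres not attained, which are handled by approximating from $C$ with strict inequalities). There are countably many gaps, so the exceptional set is countable. This yields the claim, with $L=\inf C-\pi/2$ and $U=\sup C+\pi/2$.

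Finally, if $R(e^{\tau_1})>0$ then $\tau_1$ itself is admissible. Since $\Theta$ is nondecreasing, $\sup C=\Theta(\tau_1)$, so $U=\Theta(\tau_1)+\pi/2$. I expect the main obstacle to be the gap claim at zero-radius times. There one must check that the centres just before and just after a passage through the origin (or a wait there) are genuinely approached by admissible centres, and that the jump, chosen of size at most $\pi$ in the construction of the lift, transfers to $\Theta$ with the same size. This is where \cref{lem:jumps} and the right-continuous choice enter, and where a gap of length exactly $\pi$ produces the single uncovered point that makes the statement "up to a countable set" rather than "an interval".
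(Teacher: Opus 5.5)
Your proposal is correct and follows essentially the paper's route. The paper argues over adjacent connected components of $\{R>0\}$, while you argue over gaps of the closure $\bar C$ of the set of centres. Both rest on the same two facts: $\Theta$ is continuous where $R>0$, and it advances by at most $\pi$ across each origin passage or wait, so that only midpoints of gaps of length exactly $\pi$ can be missed.

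The one point to make explicit is how a gap of $\bar C$ comes to sit across a single zero-radius stretch. This uses the polygonal structure: finitely many edges, hence finitely many passages and waits, in each compact time interval. The paper relies on the same fact.
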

\begin{proof}
The description of $\Omega$ is the definition of the score. Let $J=\{s\le\tau_1:R(s)>0\}$, which is
relatively open. Its complement is a union of origin passages and waits; a polygonal path has
finitely many edges in each compact subinterval of $(0,\infty)$ and each edge meets the origin in
one point or in a subinterval, so the complement, and hence the set of connected components of
$J$, is countable. On one component $\Theta$ is continuous and nondecreasing, so the union of the
intervals over that component is again an open interval. Let two components be adjacent, with
limiting angles $\Theta^-\le\Theta^+$; the angle advance across an origin passage or wait is at
most $\pi$, so $\Theta^+-\Theta^-\le\pi$ and the two open intervals
$(\Theta^\mp-\pi/2,\Theta^\mp+\pi/2)$ miss at most the single point $\Theta^-+\pi/2$, and only when
the advance is exactly $\pi$. Countably many components give a countable exceptional set. Finally
every source has $\Theta(s)\le\Theta(\tau_1)$, so $\Omega\subseteq(-\infty,\Theta(\tau_1)+\pi/2)$, with
equality of the suprema when $R(e^{\tau_1})>0$: the half-line above $U$ has no source at all.
\end{proof}

\noindent Polygonality enters \cref{lem:omega} only through the countability of the origin passages.

\begin{lemma}[Measurability of the covered set]\label{lem:measurable}
$\{(\varphi,\tau):\varphi\text{ is covered at }\tau\}=\{E_\tau(\varphi)\ge\tau+\log m\}$ is measurable.
\end{lemma}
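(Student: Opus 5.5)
The plan is to show that the map $(\varphi,\tau)\mapsto E_\tau(\varphi)-\tau$ is Borel measurable on $\R\times[\tau_0,\tau_1]$, with values in $[-\infty,\infty)$. The covered set is then a superlevel set of a measurable function, and the set identity itself is just the logarithmic form of the covering condition stated at the start of \cref{sec:book}. The difficulty is that $E_\tau$ is a supremum over an uncountable family of sources indexed by $s\le\tau$, and the score $z(s)+\log\cos(\varphi-\Theta(s))$ is not continuous in $s$: $\Theta$ jumps, and $z=-\infty$ at the origin. So the supremum cannot simply be restricted to rational times without justification.

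The first step reduces to countably many sources. For a polygonal path, the set $J=\{s:R(s)>0\}$ is relatively open, and on each of its countably many components $z$ and $\Theta$ are continuous (\cref{lem:jumps}). Off $J$ the score is $-\infty$ and contributes nothing. Fix a countable dense set $Q\subset J$ that contains, on each component, a sequence approaching each endpoint. Then, for each $\varphi$, continuity of the score in $s$ on each component gives
\[
\sup_{s\le\tau,\ s\in J}\text{score}(s,\varphi)=\sup_{s\in Q,\ s\le\tau}\text{score}(s,\varphi)
\]
whenever $\tau$ is not itself the right end of a component, since every $s\in J$ with $s\le\tau$ is a limit of points of $Q\cap(-\infty,\tau]$. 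To handle all $\tau$ uniformly, I would add to $Q$ the (countably many) points $s$ equal to such right ends that lie in $J$. I would also use the fact that the sup over $s\le\tau$ may include $s=\tau$ itself, so I would include $\tau$ directly. Concretely, I would write
\[
E_\tau(\varphi)=\max\Bigl(\sup_{q\in Q}\bigl[\mathbf 1_{q\le\tau}\,\text{score}(q,\varphi)\bigr],\ \text{score}(\tau,\varphi)\Bigr),
\]
with the convention that the indicator factor gives $-\infty$ when $q>\tau$. Each term $(\varphi,\tau)\mapsto\text{score}(q,\varphi)$ on $\{q\le\tau\}$ is Borel: $\log\cos$ of an affine function, extended by $-\infty$, is upper semicontinuous, and $\{q\le\tau\}$ is closed. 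A countable supremum of Borel functions is Borel.

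The second step handles the diagonal term $\text{score}(\tau,\varphi)=z(\tau)+\log\cos(\varphi-\Theta(\tau))$. Here $z$ and $\Theta$ are Borel in $\tau$: $\Theta$ is monotone and $z$ is piecewise continuous. Composition with the upper semicontinuous kernel is therefore jointly Borel. It then follows that $\{E_\tau(\varphi)\ge\tau+\log m\}$ is Borel in $\R^2$, and Tonelli applies as used in \cref{thm:book}.

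I expect the main obstacle to be justifying the countable reduction at the boundary of components, where $\Theta$ jumps and a supremum could be approached only from one side. I would handle it by noting two facts. Near an origin passage the radius tends to zero, so the score tends to $-\infty$ and no supremum is attained or approached there. At interior points of $J$, continuity gives approximation from the left by points of $Q$. Since $E_\tau$ only looks at $s\le\tau$, left approximation is all that is needed, apart from $s=\tau$, which the diagonal term covers.
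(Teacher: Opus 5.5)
Your proof is correct and follows the paper's argument: you restrict the supremum to a countable dense set of source times plus the diagonal term $s=\tau$, and conclude that $E$ is a countable supremum of jointly Borel functions. The only difference is how the reduction is justified. The paper uses that $z$ and $\Theta$ are right-continuous everywhere and approximates each $s<\tau$ from the right inside $[s,\tau)$, which needs neither the components of $\{R>0\}$ nor \cref{lem:jumps}. Your two-sided continuity on $\{R>0\}$ already gives the reduction for every $\tau$ by approximating from the left, so your caveat about right ends of components and the extra points added to $Q$ are unnecessary, though harmless.
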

\begin{proof}
Both $z$ and $\Theta$ are right-continuous in $s$, so in
$E_\tau(\varphi)=\sup_{s\le\tau}[z(s)+\log\cos(\varphi-\Theta(s))]$ the supremum may be taken over
$(D\cap[0,\tau))\cup\{\tau\}$ for a fixed countable dense $D$. Then $E$ is a countable supremum of
jointly measurable functions.
\end{proof}

\section{Regularity of the state}\label{app:reg}

\begin{proof}[Proof of \cref{lem:state}]
Let $\varphi\in\Omega$ with $\varphi<\Theta(\tau)-\pi/2$. Every source $s$ of $\varphi$ has
$|\varphi-\Theta(s)|<\pi/2$, hence $\Theta(s)<\varphi+\pi/2<\Theta(\tau)$, hence $s<\tau$ because
$\Theta$ is nondecreasing: $\varphi$ has only past sources, so $\tau_\sigma(\varphi)<\tau$ and
$\varphi\le\pfin(\tau)$. If $L<\Theta(\tau)-\pi/2$ then by \cref{lem:omega} such $\varphi$ come
arbitrarily close to $\Theta(\tau)-\pi/2$ from below, and if $L\ge\Theta(\tau)-\pi/2$ the
initialization gives $\pfin(\tau)\ge L$ directly; either way
$\pfin(\tau)\ge\Theta(\tau)-\pi/2$. Conversely a finalised direction has a source with angle at
most $\Theta(\tau)$, so $\pfin(\tau)\le\Theta(\tau)+\pi/2$; and $L\le\Theta(\tau)+\pi/2$, so the
initialization does not break this.

Nothing in that argument used $R(\tau)>0$, and it is worth saying why the zero-radius times --- the
origin passages and waits, which carry the jumps of $\Theta$ --- need no separate treatment. At such
a $\tau$ there is no source at all, so no direction is finalised there: $\dd\pfin$ has no atom at
$\tau$ and $\pfin(\tau)=\pfin(\tau-)$. The bound $\pfin(\tau)\ge\Theta(\tau)-\pi/2$ with the
\emph{post-jump} angle then says that the directions below $\Theta(\tau)-\pi/2$ were already
finalised strictly before $\tau$, which is exactly what the first paragraph proves, since every
$s\ge\tau$ has $\Theta(s)\ge\Theta(\tau)$. The advance across the passage is at most $\pi$, so
$\Theta(\tau)-\pi/2$ is still below $U$ and the argument has room; $a$ reaches $\pi/2$ there and
does not exceed it. Before the first departure $\Omega$ starts at $L=\Theta-\pi/2$, the front is
$L$, and $a=\pi/2$.

For $c$: the term $s=\tau$ gives $c\ge h(v_+)$, and
$c=\sup_{s\le\tau}[\Theta(s)-\Theta(\tau)+h(v(s)_+)]\le h(\log\frac1m)$.
\end{proof}

\begin{proof}[Proof of \cref{lem:skorokhod}]
(1) On an edge, $R^2$ and $(m t)^2$ are quadratic in $t$, so $v$ has at most two zeros or vanishes
identically. (2) On each of the finitely many pieces on which $v$ is monotone, $h(v_+)$ is the
composition of the absolutely continuous monotone function $h$ with a monotone absolutely
continuous function. The finiteness is essential and not decoration: $h'(0^+)=\infty$, so $h$ is
not Lipschitz at $0$ and $h\circ v$ need not even be of bounded variation for a general $v$ of
bounded variation --- for $v$ oscillating $n$ times between $0$ and $1/n$, $\Var(v)$ stays
bounded while $\Var(h\circ v)$ grows like $\sqrt n$, since $h(u)\sim\sqrt{2u}$ at $0$.
Part~(1) is what excludes this. For $s<t$,
$0\le\RF(t)-\RF(s)\le\sup_{s\le u\le t}(g(u)-g(s))_+\le\Var(g;[s,t])$. (3) On the open set
$\{\RF>g\}$ the running maximum is locally constant (\lean{Shore.running_max_const}), so $c'=-\Theta'$.
Two functions which agree on a set have equal derivatives at every accumulation point of the
set at which both are differentiable, and the other points of the set are countable
(\lean{Shore.contact_deriv_ae}); so $\RF'=g'$ almost everywhere on the contact set $\{\RF=g\}$, where
$c=h(v)$. (4) $\pfin$ is monotone and, by \cref{lem:jumps}, $\Theta$ is continuous outside dives, so
$a=\Theta-\pfin$ can jump only downwards there.
\end{proof}

\begin{lemma}[Dive time]\label{lem:divetime}
Let a dive run from $t_0$ to $t_1$ and let the unfolded angle advance by $D>0$ during it, jumps
included. Then $\tau_1-\tau_0\ge\Delta\tau_{\min}(\min(D,\pi))$. If $D>\pi$, the state reached at
the end of the dive is the one reached with the advance $\pi$.
\end{lemma}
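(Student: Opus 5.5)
The plan is to reduce the dive to a planar length estimate in unfolded polar coordinates, and then to turn that estimate into a bound on the logarithmic duration using only the two endpoint radii. At the ends of a dive we have $v=0$, that is $R(t_i)=m_*t_i$ for $i=0,1$. Inside the dive the path is $1$-Lipschitz, so its length is at most $t_1-t_0$. It therefore suffices to show that
\[
  |x\text{-length on }[t_0,t_1]|\ \ge\ \bigl|\,m_*t_0\,e^{i0}-m_*t_1\,e^{i\psi}\bigr|,\qquad \psi=\min(D,\pi),
\]
and then to solve the resulting inequality for $\tau_1-\tau_0=\log(t_1/t_0)$.

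\emph{Step 1: unfolding preserves length.} Consider the auxiliary planar curve $y(s)=R(s)\,(\cos\Theta(s),\sin\Theta(s))$. Between zero-radius times, $\tilde\theta$ is continuous and piecewise analytic, and $\mathrm d\Theta=|\mathrm d\tilde\theta|$. Since $\mathrm ds^2=\mathrm dR^2+R^2\,\mathrm d\tilde\theta^2=\mathrm dR^2+R^2\,\mathrm d\Theta^2$, the curve $y$ has the same length as $x$ there. At the jumps of $\Theta$ we have $R=0$ by \cref{lem:jumps}, so $y$ sits at the origin and is continuous; the jump costs nothing in either curve. Hence $y$ is a continuous curve of the same length as $x$ on $[t_0,t_1]$. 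Its endpoints lie at radii $m_*t_0$ and $m_*t_1$, and their polar angles differ by exactly $D$.

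\emph{Step 2: a chord bound.} If $D\le\pi$, the length of $y$ is at least the Euclidean distance between its endpoints. That distance is the chord $\bigl(r_0^2+r_1^2-2r_0r_1\cos D\bigr)^{1/2}$ with $r_i=m_*t_i$. If $D>\pi$, the endpoints are not joined by a planar chord in the obvious way. Here I would instead use that $y$ has monotone angle sweeping more than $\pi$, so it must cross the ray opposite to the start, or pass through the origin. Either way its length is at least $r_0+r_1$, which is the chord for $\psi=\pi$. This case split is the only delicate point, and I would handle it by splitting $y$ at the first time the swept angle reaches $\pi$.

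\emph{Step 3: from length to duration.} Put $r=t_1/t_0$. The inequality $(t_1-t_0)^2\ge m_*^2\bigl(t_0^2+t_1^2-2t_0t_1\cos\psi\bigr)$ rearranges to
\[
  (1-m_*^2)(r-1)^2\ \ge\ 4m_*^2\,r\sin^2(\psi/2).
\]
Dividing by $r$ and using $(r-1)/\sqrt r=2\sinh\bigl((\tau_1-\tau_0)/2\bigr)$ gives $\sinh\bigl((\tau_1-\tau_0)/2\bigr)\ge\kappa\sin(\psi/2)$. This is the claimed bound $\tau_1-\tau_0\ge\Delta\tau_{\min}(\min(D,\pi))$.

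\emph{Step 4: the arrival state for $D>\pi$.} By \cref{lem:state}, $c_0\le\arccos m_*<\pi/2<\pi$ and $a_0\ge-\pi/2$. Hence both $\max(c_0-D,0)=0=\max(c_0-\pi,0)$ and $\min(a_0+D,\pi/2)=\pi/2=\min(a_0+\pi,\pi/2)$ hold. So the formulas for the arrival state used in the proof of \cref{thm:A} give the same state as for the advance $\pi$.

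The main obstacle is Step 2 for $D>\pi$, together with making rigorous in Step 1 that the jumps through the origin do not break the length identity.
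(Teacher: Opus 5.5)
Your proposal is correct and is essentially the paper's proof. It uses the same unfolded curve $Re^{i\Theta}$, continuous across the jumps because they occur at $R=0$, and the chord bound for $D\le\pi$. For $D>\pi$ it splits at the point where the advance reaches $\pi$; it then rearranges via the same identity $1+x^2-2x\cos D=(x-1)^2+4x\sin^2\frac D2$ and gets the arrival state from $a_0\ge-\frac\pi2$ and $c_0<\pi$, exactly as the paper does.

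The one step you leave implicit is spelled out in the paper. At the split point $B$ of radius $r\ge0$ (the origin, if the value $\pi$ is jumped over), the triangle inequality gives length at least $(R_0+r)+|R_1-r|\ge R_0+R_1$.
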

\begin{proof}
The curve $\gamma=Re^{i\Theta}$ is a continuous planar curve of the same length as the dive:
$|\Theta'|=|\tilde\theta'|$ away from the origin, and $\Theta$ jumps only where $R=0$
(\cref{lem:jumps}), where $\gamma$ vanishes on both sides, so $\gamma$ is continuous across the
jump and $|\gamma'|^2=R'^2+R^2\Theta'^2=|x'|^2\le1$ elsewhere. The dive starts at $R_0=m_*t_0$ and
ends at $R_1=m_*t_1$. If $D\le\pi$, passages through the origin included,
\[
 t_1-t_0\ \ge\ \text{length}\ \ge\ |\gamma(\text{end})-\gamma(\text{start})|=m_*\sqrt{t_0^2+t_1^2-2t_0t_1\cos D}.
\]
If $D>\pi$, let $B$, of radius $r\ge0$, be the point of $\gamma$ at which the advance reaches $\pi$ (the
origin, if that value is jumped over). Then the length is at least $(R_0+r)+|R_1-r|\ge R_0+R_1$, which is
the value of the chord for $D=\pi$, and the arrival state is that of $D=\pi$ because $a_0\ge-\frac\pi2$
and $c_0<\pi$. With $x=t_1/t_0$ and $1+x^2-2x\cos D=(x-1)^2+4x\sin^2\frac D2$, the inequality
$x-1\ge m_*\sqrt{1+x^2-2x\cos D}$ reads $\sqrt x-1/\sqrt x\ge2\kappa\sin\frac D2$, that is
$\tau_1-\tau_0\ge\Delta\tau_{\min}(\min(D,\pi))$. (The function $x\mapsto(x-1)-m\sqrt{1+x^2-2x\cos D}$
increases with slope at least $1-m$, \lean{Shore.dive_ratio_ge_root}.)
\end{proof}

\section{The certificate in detail}\label{app:cert}

\paragraph{The $C^1$ extension.} In $\beta$ rather than $v$, the storage is
$\hat V(\beta,a,c)=G(\beta,a,c)+\hat F(\beta,a)$ with
$\hat F(\beta,a)=\int_{-\beta}^{\clip(a,-\beta,\beta)}\log\frac{\cos s}{\cos\beta}\dd s$, so that
$\hat F_a=\fan$ and $\hat F_\beta=\bigl(\clip(a,-\beta,\beta)+\beta\bigr)\tan\beta$: both are continuous
for $|\beta|<\frac\pi2$, so $\hat F$ is $C^1$ there. A cubic spline with simple knots is $C^2$ on its box,
and prolonging the polynomials of the end cells extends $G$ past $\beta=0$ and $c=0$ with the same
smoothness; since $\arccos m_*<\frac32$ and $\frac\pi2<\frac85$, the image of $D_3$ lies in the box.
Hence $\hat V$ is $C^1$ on an open neighbourhood of that image, and $V=\hat V(h(v),a,c)$ is the
restriction of a $C^1$ function, as \cref{sec:verif} requires. The singular factor
$h'(v)=\cot\beta$ occurs only in $\dd\beta=\cot\beta\dd v$, never in $\hat V$, and
$V_v\dd v=\hat V_\beta\dd\beta$: \cref{it:corner} of \cref{sec:cert} says exactly that $\hat V_\beta$ vanishes on
$\beta=0$, which is what makes $\beta=h(v_+)$ enter the chain rule of \cref{thm:A} as an absolutely
continuous coordinate (\cref{lem:skorokhod}(2)).

\begin{proof}[Proof of \cref{lem:jet}]
Near the spiral state we are on the flat patch and on the branch $|a|<\beta$, $a+\beta>0$, where
$V_a=\delta:=\log(\cos a/\cos\beta)$ and $F_v=a+\beta$. With $A=2\pi-a-\beta$ and
$\tilde c(\beta)=(K_\beta\cot\beta+2\pi)/P$, which depends on $\beta$ only,
\[
  H=P\bigl[\tilde c-N\bigr],\qquad N=\sqrt{X^2+\delta^2},\qquad X=P\tilde c-A .
\]
The jet correction (\cref{it:jet} of \cref{sec:cert}) is exactly the statement
$\tilde c(\beta_*)=\delta_*P_*/k_*$. \emph{Value:} since
$P_*^2=1+k_*^2$, we have $N=\tilde c$ as soon as $X=\delta/k$; and $X=\delta P^2/k-A$ equals $\delta/k$
iff $k\delta=A$, the contact equation \eqref{eq:contact}. Then $X/N=1/P$ and $\delta/N=k/P$, the
heading of the spiral. \emph{$a$-derivative:} $\partial_aH=-P(X-\delta\tan a)/N$, which vanishes
because $X=\delta/k$ and $\tan\alpha_*=1/k_*$. \emph{$\beta$-derivative:} the factor of $\tilde c'$ is
$P(1-XP/N)=0$, the term $P_\beta(\tilde c-N)$ vanishes, and what remains is
$-(P/N)[X(1-P\tilde c\tan\beta)+\delta\tan\beta]$, which vanishes iff $\delta(1+k^2)=k(k+\cot\beta)$,
the optimal-pitch equation \eqref{eq:pitch}.
\end{proof}

\paragraph{Boundaries of the cover.} Both implementations keep a knot cell unless it lies beyond
$\arccos m_*$ in $\beta$ or $c$ or beyond $\frac\pi2$ in $|a|$, clip the straddling cell to the
bound, and discard a $(\beta,c)$ pair only when the whole cell lies below the surface $c=\beta$.
Since $\arccos m_*=1.49833\ldots<\frac32$ and $\frac\pi2<\frac85$, the top slivers of the spline box
are never visited, correctly: they are not reachable. A box which is cut by the surface $c=\beta$ is
expanded about its corner $(\beta_{\min},\cdot,c_{\max})$: the segment from that corner to a point of
the box with $c\ge\beta$ stays in $\{c\ge\beta\}$, because $c-\beta$ is affine on it and nonnegative
at both ends. Near $\beta=0$ one uses that $G_\beta/\beta$ and $(G_\beta+G_c)(\beta,a,\beta)/\beta$ are
polynomials, by \cref{it:corner} of \cref{sec:cert}, and that $\beta\cot\beta$ is smooth.

\paragraph{Dives.} Write $T(a,c,D)$ for the slack of (A4). Then $T(a,c,0)=0$, so short dives need an
argument beyond a cover. Either of two is used: the generalized $D$-derivative of $T$ is positive on
$[0,\frac18]$ (second implementation), or $T$ is bounded below through
$\Gamma(a,c)=(2\pi-(a+c)_+)\kappa-(G_a-G_c)(0,a,c)$, the concavity of $\Delta\tau_{\min}$, (A1) and
$G_c(0,a,0)=0$ (first implementation, $D\le\frac1{16}$). Larger dives are covered directly in
$(a,c,D)$.

\paragraph{How $V$ was found.} (A2) and (A3) are second-order cone constraints on the
coefficients, (A1) and (A4) are linear. A cone program maximises a margin which is weighted so as
to vanish at the spiral state. Dense floating-point grids are \emph{not} a reliable test here ($P$
reaches $13.8$ near $v=0$, and the constraint functions have valleys a few thousandths wide): a
candidate which passed three dense grids was rejected by the exact covers. The fit therefore uses
the covers themselves as a separation oracle. At the level $m_*$ the optimal margin is positive; in
a uniform-margin calibration it is $-0.155$, $-0.051$, $+3.4\cdot10^{-7}$, $+0.051$ at
$m/m_*=0.97,0.99,1,1.01$, so the relaxation loses nothing visible. The verification does not
depend on how $V$ was found, and the construction is reproducible: \file{reproduce-certificate.sh}
runs the fit with its oracle, the projection and both verifiers from scratch. On a laptop this
takes about seventy-five minutes and produces a different spline, with the same optimal margin,
which both implementations verify.

\section{The geometric test}\label{app:test}
\begin{figure}[t]
\centering
\includegraphics[width=0.92\linewidth]{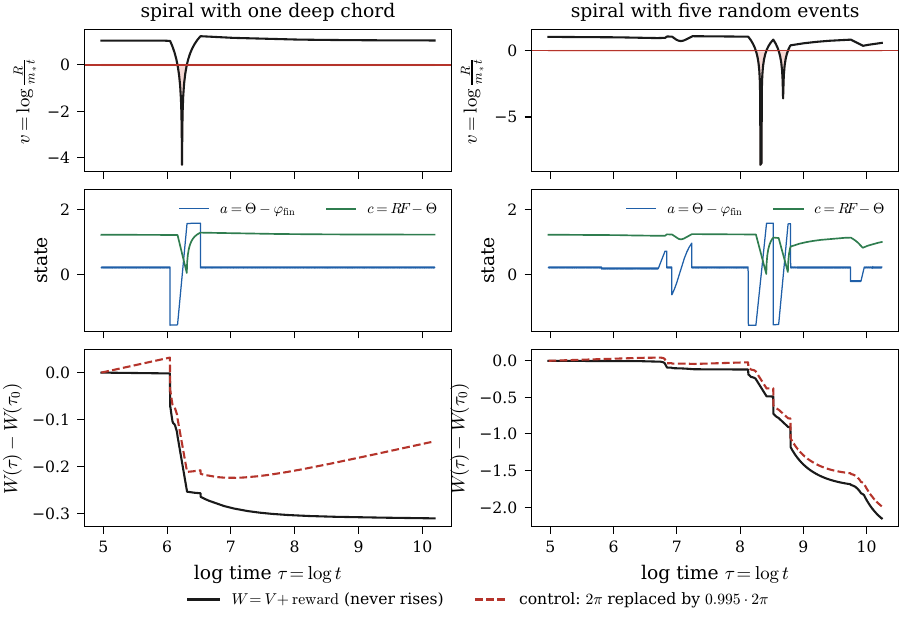}
\caption{Two test paths with decreasing radius. Top: $v<0$ is a dive below the guaranteed disk.
Middle: the state, computed from the geometry. Bottom: $V+\text{accumulated reward}$; the dashed
line is a control with a deliberately wrong accounting.}
\label{fig:W}
\end{figure}

As a safeguard against a modelling error in \cref{thm:book,thm:A}, all quantities were computed
from the geometry of 73 polygonal paths (perturbed spirals with chords, radial spikes, overshoots,
runs along the rim $R=(1+\epsilon)m_*t$ with $|\epsilon|$ down to $10^{-3}$, backward retracing,
vertices at the origin, departures from the origin at exactly the antipodal angle, waiting, other
pitches; 101 dives): the exact covered time of every lifted direction, the final sources, both
fronts and the state. \Cref{lem:selector} held exactly; the two inequalities behind \cref{lem:KP} up
to rounding; the front bound in the proof of \cref{thm:book} up to half a grid step; the dive-time
bound with straight chords as the equality case; and $V+\text{reward}$ never increased, at three
resolutions (\cref{fig:W}).

The state-space bounds of \cref{lem:state} are \emph{measured} there and not imposed: the front is
recomputed without the clamp at $\Theta-\pi/2$ which would enforce $a\le\pi/2$, and the excursion of
$(v,a,c)$ outside $D_3$ is reported face by face. It is zero on $v$ and on $c\le\arccos m_*$, is
$7\cdot10^{-15}$ on $c\ge h(v)$, and on $|a|\le\frac\pi2$ it is one grid step, which is the front
being a cell count rather than an escape. The sharp case is a departure from the origin at exactly
the antipodal angle: there the unfolded advance is exactly $\pi$, $\Omega$ really does lose a point,
and $a$ touches $\frac\pi2$ with no slack, so any discretisation shows. On those paths the measured
violation of $a\le\frac\pi2$ is $0.39$, $0$ and $0.19$ grid steps at grid $0.002$, $0.001$ and
$0.0005$: below one cell at every resolution. This matters because \cref{lem:state} is the
hypothesis under which the certified inequalities are about the real path at all, and it is the one
statement of the reduction whose first version was false.

A half per cent error in the level $2\pi$, or one per cent in the fan reward, makes $V+\text{reward}$
increase visibly on every path. The test is in floating point and is not part of the proof; it
would not detect a second-order modelling error below its discretisation drift.

\bibliographystyle{plain}
\bibliography{references}
\end{document}